\newtheorem{theorem}{Theorem}[section]
\newtheorem{lemma}[theorem]{Lemma}
\newtheorem{proposition}[theorem]{Proposition}
\newtheorem{corollary}[theorem]{Corollary}   
\newtheorem{definition}[theorem]{Definition}
\newtheorem{example}[theorem]{Example}
\newtheorem{remark}[theorem]{Remark}
\newtheorem{question}[theorem]{Question}
\newtheorem{setup}{Setup}
\newtheorem{hypothesis}{Hypothesis}
\numberwithin{equation}{section}
\newcommand{\ov}{\overline}
\begin{document}

\title{Cohen-Macaulay Property of Binomial Edge Ideals with Girth of Graphs}
\author{
Kamalesh Saha \and Indranath Sengupta
}
\date{}

\address{\small \rm  Discipline of Mathematics, IIT Gandhinagar, Palaj, Gandhinagar, 
Gujarat 382355, INDIA.}
\email{kamalesh.saha@iitgn.ac.in}

\address{\small \rm  Discipline of Mathematics, IIT Gandhinagar, Palaj, Gandhinagar, 
Gujarat 382355, INDIA.}
\email{indranathsg@iitgn.ac.in}
\thanks{The second author is the corresponding author; supported by the 
research grant CRG/2022/007047, sponsored by the SERB, Government of India.}

\date{}

\subjclass[2020]{Primary 13C14, 13F65, 13F55, 05E40, 05C25}

\keywords{Binomial edge ideals, Cohen-Macaulay rings, Initial ideals, Depth, Girth}

\allowdisplaybreaks

\begin{abstract}
Conca and Varbaro (Invent. Math. 221 (2020), no. 3) showed the equality of depth of a graded ideal and its initial ideal in a polynomial ring when the initial ideal is square-free. In this paper, we give some beautiful applications of this fact in the study of Cohen-Macaulay binomial edge ideals. 
We prove that for the characterization of Cohen-Macaulay binomial edge ideals, it is enough to consider only ``biconnected graphs with some whisker attached" and this done by investigating the initial ideals. 
We give several necessary conditions for a binomial edge ideal to be Cohen-Macaulay in terms of 
smaller graphs. Also, under a hypothesis, we give a sufficient condition for Cohen-Macaulayness of 
binomial edge ideals in terms of blocks of graphs. Moreover, we show that a graph with Cohen-Macaulay 
binomial edge ideal has girth less than $5$ or equal to infinity.
\end{abstract}

\maketitle

\section{Introduction}
Corresponding to a graph, we can associate ideals of polynomial rings in several ways. 
This has become an attractive area of research in modern commutative algebra due to 
some rich applications of these ideals in other areas. One of the most studied ideals 
among them in the last decade has been the binomial edge ideals of graphs.
\medskip

Let $G$ be a simple graph on the vertex set $V(G)=[n]=\{1,\ldots,n\}$ with the edge set $E(G)$. For a fixed field $K$, consider the polynomial ring $\mathcal{S}=K[x_{1},\ldots,x_{n},y_{1},\ldots,y_{n}]$. Then the \textit{binomial edge ideal} of $G$, denoted by $J_{G}$, is an ideal of $\mathcal{S}$ defined as
$$J_{G}=\big<f_{ij} = x_{i}y_{j}-x_{j}y_{i}\mid \{i,j\}\in E(G)\,\, \text{with}\,\, i<j\big>.$$

The study of binomial edge ideals started in 2010 through the articles \cite{hhhrkara} 
and \cite{ohtani} independently. Binomial edge ideals can be seen as a generalization of determinantal ideals of $2$-minor of a $2\times n$ matrix of indeterminates. One of the primary motivations behind studying these ideals is its connection to Algebraic Statistics, particularly its appearance in the study of conditional independence statements \cite[Section 4]{hhhrkara}. Moreover, it is proved in \cite{cdg18} that binomial edge ideals belong to the class of famous \textit{Cartwright-Sturmfels} ideals, which was introduced in \cite{cdg20} inspired by the work of Cartwright and Sturmfels \cite{cs10} and has many nice properties.
\medskip

Generally, people study algebraic properties and invariants of binomial edge ideals by investigating combinatorics of the underlying graphs. One of the most important problems regarding these ideals is to classify Cohen-Macaulay binomial edge ideals combinatorially, as it will give the field independency of the Cohen-Macaulay property of binomial edge ideals. A substantial amount of research has been done in this direction (\cite{mont}, \cite{bms_cmbip}, \cite{acc}, \cite{ehh_cmbin}, \cite{hhhrkara}, \cite{s2acc}, \cite{ksara_cmunm}, \cite{cactus}, \cite{rin_smaldev}), but a complete combinatorial characterization of Cohen-Macaulay binomial edge ideals is still open. To give a combinatorial characterization of Cohen-Macaulay binomial edge ideals, Bolognini et al., in \cite{acc}, have introduced the notion of \textit{accessible} graphs (Definition \ref{defacc}). They proved that $J_{G}$ is Cohen-Macaulay implies $G$ is accessible and conjectured \cite[Conjecture 1.1]{acc} about the converse.
\medskip

In \cite[Corollary 2.7]{cv20}, Conca and Varbaro showed that for a graded ideal in a polynomial ring with a square-free initial ideal for some term order, the depth of that ideal and its initial ideal are equal. Using this fact and computing the depth of initial ideals of binomial edge ideals, we derive some results regarding the Cohen-Macaulay property of binomial edge ideals, some of which were proved for the case of accessibility in \cite{acc} and \cite{whisker}. For a given graph $G$, we show how beautifully the Cohen-Macaulay property of $J_{G}$ is related to the same of some other derived graphs from $G$, which appear naturally in the study of binomial edge ideals. Also, we show how the girth of a graph $G$ is connected with the Cohen-Macaulay property of $J_{G}$. The paper is written in the following fashion.
\medskip 

Section \ref{preli} is devoted to prerequisites of 
graph theory and commutative algebra for a better understanding of the rest of the paper.
\medskip 

In Section \ref{cminitial}, we first derive Lemma \ref{propcmcolon}, which is one of the key tools to prove the rest of the results in this section. By \cite[Corollary 2.7]{cv20}, it is clear that $J_{G}$ is Cohen-Macaulay if and only if $\mathrm{in}_{<}(J_{G})$ is Cohen-Macaulay, where $<$ is the lexicographic order on $\mathcal{S}$ induced by $x_{1}>\cdots>x_{n}>y_{1}>\cdots>y_{n}$. We prove in Lemma \ref{lemunmin} that $J_{G}$ is unmixed if and only if $\mathrm{in}_{<}(J_{G})$ is unmixed. By \cite[Proposition 5.13]{acc}, to prove \cite[Conjecture 1.1]{acc}, it is enough to show that every non-complete connected accessible graph $G$ has 
a cut vertex $v$ such that $G\setminus \{v\}$ is unmixed. Inspire by this fact, we prove the following:
\medskip

\noindent \textbf{Lemma \ref{lemG-vunm}, \ref{lembothnf}.} \textit{Let $G=G_{1}\cup G_{2}$ be a connected graph such that $V(G_{1})\cap V(G_{2})=\{v\}$ and $J_{G}$ be unmixed (resp. Cohen-Macaulay). If $v$ is not a free vertex of $G_{1}$ and $G_{2}$ both, then $J_{G\setminus\{v\}}$ is unmixed (resp. Cohen-Macaulay).}
\medskip

\noindent For a vertex $v$ of a graph $G$, we denote by $G_{v}$ the following graph:
$$ V(G_{v})=V(G)\,\, \text{and}\,\, E(G_{v})=E(G)\cup \{\{i,j\}\mid i,j\in \mathcal{N}_{G}(v), i\neq j\}.$$
In Proposition \ref{propGv}, we show that if $J_G$ is Cohen-Macaulay, then $J_{G_v}$ is also Cohen-Macaulay.

\noindent Let $G$ be a graph and $v$ be a non-free vertex of $G$. Then, 
by \cite[Lemma 4.8]{ohtani} and the proof of \cite[Theorem 1.1]{ehh_cmbin}, 
we have the following exact sequence:
$$ 0\longrightarrow \mathcal{S}/J_{G}\longrightarrow \mathcal{S}/J_{G_{v}}\oplus \mathcal{S}/\big<J_{G\setminus \{v\}},x_{v},y_{v}\big> \longrightarrow \mathcal{S}/\big<J_{G_{v}\setminus\{v\}}, x_{v},y_{v}\big>\longrightarrow 0.$$
The graphs $G\setminus \{v\}, G_{v}, G_{v}\setminus\{v\}$ play a crucial role in the study of binomial edge ideals, and these appear in many papers. In Proposition \ref{lemGiv} and \ref{propGv-v}, We try to give some necessary conditions for Cohen-Macaulayness of $J_{G}$ through the same property of some reduced graphs. As a corollary, we show how surprisingly the Cohen-Macaulay property of $J_{G}$ is related to the Cohen-Macaulay property of $J_{G\setminus\{v\}}, J_{G_v}$, and $J_{G_v\setminus\{v\}}$.
\medskip

\noindent \textbf{Corollary \ref{corbothnf}.} \textit{Let $G=G_{1}\cup G_{2}$ be a connected graph such that $V(G_{1})\cap V(G_{2})=\{v\}$ and $J_{G}$ be Cohen-Macaulay. If $v$ is a non-free vertex of $G_1$ and $G_2$ both, then $J_{G\setminus\{v\}}$, $J_{G_v}$ and $J_{G_v\setminus\{v\}}$ are Cohen-Macaulay. 
}
\medskip

\noindent Our main results give a sufficient and necessary condition for $J_G$ to be Cohen-Macaulay, where the sufficient condition depends on the following hypothesis.
\medskip

\noindent \textbf{Hypothesis \ref{hypo}.} \textit{Let $G$ be a graph and $v\in V(G)$ be a cut vertex of $G$. If $J_{G}$ is Cohen-Macaulay and $J_{G\setminus \{v\}}$ is unmixed, then $J_{G\setminus\{v\}}$ is Cohen-Macaulay.}
\medskip

We strongly believe that Hypothesis \ref{hypo} is true, but we are not able to prove it. If \cite[Conjecture 1.1]{acc} is true, then Hypothesis \ref{hypo} naturally holds. Also, due to Lemma \ref{lembothnf}, a partial case of the hypothesis holds true. Thus, we keep Hypothesis \ref{hypo} as an open problem. The main results of this section, which reduce the class of graphs to look at for the characterization of Cohen-Macaulay binomial edge ideals, are given below.
\medskip

\noindent\textbf{Theorem \ref{thmGcm}.} \textit{Let $G=G_{1}\cup G_{2}$ be a graph such that $V(G_{1})\cap V(G_{2})=\{v\}$. Consider the graph $\overline{G_{i}}$ by attaching a whisker to the graph $G_{i}$ at the vertex $v$ for $i=1,2$. If $J_{\ov{G_i}}$ is Cohen-Macaulay for each $i\in\{1,2\}$ and $J_{G}$ is unmixed, then $J_{G}$ is Cohen-Macaulay under the Hypothesis \ref{hypo}.
}
\medskip

\noindent\textbf{Theorem \ref{thmGbarcm}.} \textit{Let $G=G_{1}\cup G_{2}$ be a connected graph such that $V(G_{1})\cap V(G_{2})=\{v\}$. Consider the graph $\overline{G_{i}}$ by attaching a whisker to the graph $G_{i}$ at the vertex $v$ for $i=1,2$. If $J_{G}$ is Cohen-Macaulay, then $J_{\ov{G_{1}}}$ and $J_{\ov{G_{2}}}$ are Cohen-Macaulay.
}
\medskip

\noindent As an application of Theorem \ref{thmGcm} and \ref{thmGbarcm}, we get Corollary \ref{corblockcm1} and \ref{corblockcm2} respectively, given as an open problem in \cite[Question 5.11]{whisker}, which ensures that it is enough to focus only ``biconnected graphs with some whiskers attached" for the characterization of Cohen-Macaulay binomial edge ideals. Also, Corollary \ref{corblockcm1} and \ref{corblockcm2} boil down \cite[Conjecture 1.1]{acc} to ``biconnected graphs with some whiskers attached". 
For a graph $G$ with a large vertex and edge sets, it isn't easy to calculate the depth of $J_{G}$ and check Cohen-Macaulayness of $J_{G}$ using software algebra. Due to Corollary \ref{corblockcm1} and \ref{corblockcm2}, we can break a large graph $G$ into smaller pieces by looking at their blocks, and check the Cohen-Macaulayness of $J_G$ in an easier way. In the end, using Theorem \ref{thmGcm} and \ref{thmGbarcm}, we settle the open problem \cite[Problem 7.2]{acc} in Corollary \ref{coridentify}.
\medskip

An important invariant, which plays a crucial role in Graph Theory, is 
the girth of graphs. The \textit{girth} of a graph is defined as 
the length of a shortest 
induced cycle in $G$, denoted by $\mathrm{girth}(G)$. In Section \ref{secgirth}, we show how girth of graphs and Cohen-Macaulay property of binomial edge ideals are related to each other through the following theorem.
\medskip 

\noindent \textbf{Theorem \ref{thmgirth}.} \textit{Let $G$ be a graph. If $J_{G}$ is Cohen-Macaulay, then $\mathrm{girth}(G)\leq 4$ or $\mathrm{girth}(G)=\infty$.
}
\medskip

We end this article with a list of questions in Section \ref{secprob}.

\section{Preliminaries}\label{preli}
Let $R=K[x_{1},\ldots, x_{n}]$ be a polynomial ring over a field $K$, with the standard gradation. An Ideal $I$ of $R$ is said to be a \textit{monomial ideal} if $I$ is generated by a set of monomials and the unique minimal generating set of $I$ is denoted by $G(I)$. A monomial ideal $I$ is said to be \textit{square-free} if $G(I)$ consists of only square-free monomials. It is a well-known fact that square-free monomial ideals are radical ideals, and every associated prime of a square-free monomial ideal is generated by a subset of the set of variables in $R$.
\medskip

Let $I\subseteq R$ be a graded ideal. We denote the associated prime ideals of $I$ by $\mathrm{Ass}(I)$, the height of $I$ by $\mathrm{ht}(I)$, the depth of $R/I$ by $\mathrm{depth}(R/I)$, the Krull dimension of $R/I$ by $\mathrm{dim}(R/I)$, and the radical of $I$ by $\mathrm{rad}(I)$. Let $<$ be a term order on $R$. For a graded ideal ideal $I\subseteq R$, we denote the \textit{initial ideal} of $I$ with respect to $<$ by $\mathrm{in}_{<}(I)$. By saying an ideal $I$ of $R$ is Cohen-Macaulay, we mean the quotient ring $R/I$ is Cohen-Macaulay. We use the following results frequently in our proofs.

\begin{remark}\label{remdepth}{\rm
Let $I\subseteq R=K[\mathbf{x}]$, $J\subseteq R^{'}=K[\mathbf{y}]$ be two ideals and $S=K[\mathbf{x},\mathbf{y}]$, where $K$ is a field. Then $\mathrm{depth}(S/(I+J))=\mathrm{depth}(R/I)+\mathrm{depth}(R^{'}/J)$.
}
\end{remark}

\begin{remark}\label{remcmin}{\rm
Let $I\subseteq R$ be a graded ideal such that $\mathrm{in}_{<}(I)$ is square-free with respect to some term order. Then $\mathrm{depth}(R/I)=\mathrm{depth}(R/\mathrm{in}_{<}(I))$ by \cite[Corollary 2.7]{cv20}. Since $\mathrm{dim}(R/I)=\mathrm{dim}(R/\mathrm{in}_{<}(I))$, it follows that $R/I$ is Cohen-Macaulay if and only if $R/\mathrm{in}_{<}(I)$ is Cohen-Macaulay.
}
\end{remark}

\begin{lemma}[{\cite[Lemma 5.1]{dhs13}}]\label{lemheuneke}
Let $I\subseteq R$ be a square-free monomial ideal, and $\Delta$ be any subset of the variables in $R$. After relabelling the variables, we assume $\Delta=\{x_{1},\ldots,x_{k}\}$. Then either $\mathrm{depth}(R/I)=\mathrm{depth}(R/\big<I,x_{1},\ldots,x_{k}\big>)$ or there exists a $j\in \{1,\ldots,k\}$ such that $\mathrm{depth}(R/I)=\mathrm{depth}(R/(\big<I,x_{1},\ldots,x_{j-1}\big>:x_{j}))$. (We consider $x_{j}=0$ wherever applicable).
\end{lemma}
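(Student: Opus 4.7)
The plan is to proceed by induction on $k=|\Delta|$. Throughout, set $J_{j}:=\langle I,x_{1},\ldots,x_{j}\rangle$ (so $J_{0}=I$), $d_{j}:=\mathrm{depth}(R/J_{j})$, and $c_{j}:=\mathrm{depth}(R/(J_{j-1}:x_{j}))$, so the target statement becomes: either $d_{0}=d_{k}$, or $d_{0}=c_{j}$ for some $j\in\{1,\ldots,k\}$.

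For the base case $k=1$, I would apply the depth lemma to the colon--sum short exact sequence
\[
0\longrightarrow R/(I:x_{1})(-1)\xrightarrow{\ \cdot x_{1}\ } R/I\longrightarrow R/\langle I,x_{1}\rangle\longrightarrow 0,
\]
which yields $d_{0}\geq\min(c_{1},d_{1})$, and then upgrade this inequality to the sharper conclusion $d_{0}\in\{c_{1},d_{1}\}$ by a diagram chase on the associated long exact sequence of $\mathrm{Ext}^{\bullet}(K,-)$ at the minimal-nonvanishing degree $d_{0}$, using $\mathrm{Ext}^{d_{0}-1}(K,R/I)=0$ and $\mathrm{Ext}^{d_{0}}(K,R/I)\neq 0$. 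Square-freeness of $I$ enters crucially via the identity $(I:x_{1}^{2})=(I:x_{1})$ (easy to verify on monomial generators), which makes $x_{1}$ a non-zero-divisor on $R/(I:x_{1})$ and ensures that the left-hand map in the SES has the structural properties needed for the Ext analysis to produce an equality rather than only an inequality.

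For the inductive step, I would first apply the base case to the SES at index $j=k$ to obtain $d_{k-1}\in\{c_{k},d_{k}\}$, and then invoke the induction hypothesis on $\Delta'=\{x_{1},\ldots,x_{k-1}\}$ applied to the original ideal $I$ to obtain $d_{0}\in\{c_{1},\ldots,c_{k-1},d_{k-1}\}$. Combining these two memberships places $d_{0}$ in $\{c_{1},\ldots,c_{k},d_{k}\}$, which is exactly the dichotomy asserted in the lemma. (The parenthetical convention ``$x_{j}=0$ wherever applicable'' handles the degenerate indices at which $x_{j}\in J_{j-1}$, where the colon module is zero and the corresponding $c_{j}$ term drops out of the analysis.)

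The main obstacle I anticipate is precisely the promotion of the depth-lemma inequality to an equality in the base case. For a generic short exact sequence of modules the depth lemma delivers only a $\min$-type lower bound on $\mathrm{depth}(B)$, and nothing in pure homological algebra forces $\mathrm{depth}(B)$ to coincide with either $\mathrm{depth}(A)$ or $\mathrm{depth}(C)$; in principle one could have $d_{1}<c_{1}<d_{0}$ and violate the dichotomy. What rules out such pathology here is the combination of the left arrow being multiplication by a variable and the square-freeness of $I$, which together preserve square-freeness under iterated colon and supply the requisite non-zero-divisor behaviour. Carrying this out rigorously via the $\mathrm{Ext}^\bullet(K,-)$ long exact sequence is the delicate technical heart of the argument, and is the reason we invoke \cite[Lemma~5.1]{dhs13} directly rather than reproving it here.
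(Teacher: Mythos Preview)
The paper does not supply a proof of this lemma; it is quoted verbatim from \cite[Lemma~5.1]{dhs13} and used as a black box. So there is no ``paper's proof'' to compare against, and your outline already goes further than the text does.

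Your inductive skeleton is correct: the reduction to $k=1$ via $d_{k-1}\in\{c_{k},d_{k}\}$ and the induction hypothesis $d_{0}\in\{c_{1},\dots,c_{k-1},d_{k-1}\}$ combine exactly as you say (and $J_{k-1}$ remains square-free, so the base case applies at each step). You also correctly isolate the only nontrivial point, namely upgrading $d_{0}\geq\min(c_{1},d_{1})$ to $d_{0}\in\{c_{1},d_{1}\}$, and correctly note that the depth lemma alone permits the bad configuration $d_{1}<c_{1}<d_{0}$.

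Where your sketch goes slightly astray is the mechanism you invoke to close this gap. The observation $(I:x_{1}^{2})=(I:x_{1})$, i.e.\ that $x_{1}$ is a non-zero-divisor on $R/(I:x_{1})$, is true but by itself only yields $c_{1}\geq 1$; it does not compare $c_{1}$ with $d_{0}$, and I do not see how it feeds into an $\mathrm{Ext}$ chase to force the dichotomy. The input that actually does the work is the inequality
\[
\mathrm{depth}\bigl(R/(I:x_{1})\bigr)\ \geq\ \mathrm{depth}(R/I),
\]
valid for square-free monomial ideals (this is precisely \cite[Lemma~4.1]{chhktt19}, already used in the paper in the proof of Lemma~\ref{propcmcolon}). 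Granting $c_{1}\geq d_{0}$, the dichotomy is immediate from the three standard depth inequalities for $0\to A\to B\to C\to 0$: if $c_{1}=d_{0}$ we are done; if $c_{1}>d_{0}$ then $d_{1}\geq\min(c_{1}-1,d_{0})=d_{0}$, while $d_{0}\geq\min(c_{1},d_{1})$ forces $d_{0}\geq d_{1}$, hence $d_{0}=d_{1}$. No $\mathrm{Ext}$ diagram chase is needed. So your plan is sound, but replace the NZD remark with the colon--depth inequality as the square-freeness input.
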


All graphs are assumed to be simple. For $T\subseteq V(G)$, we write $G\setminus T$ to denote the induced subgraph of $G$ on the vertex set $V(G)\setminus T$.
For a vertex $v\in V(G)$, we say $\mathcal{N}_{G}(v)=\{u\in V(G)\mid \{u,v\}\in E(G)\}$ the \textit{neighbour set} of $v$ in $G$. We call $\vert \mathcal{N}_{G}(v)\vert$ 
the \textit{degree} of a vertex $v$ in $G$. If $\mathcal{N}_{G}(v)=\{u\}$, then $\{u,v\}\in E(G)$ 
is called a \textit{whisker} attached to $u$. A \textit{cycle} of length $n$ is a connected graph with $n$ vertices such that the degree of each vertex is $2$. A \textit{path} from $u$ to $v$ of length $n$ 
in $G$ is a sequence of vertices $u=v_{0},\ldots,v_{n}=v\in V(G)$, such that $\{v_{i-1},v_{i}\}\in E(G)$ for each $1\leq i\leq n$, and 
$v_{i}\neq v_{j}$ if $i\neq j$.

\begin{definition}{\rm
Let $G$ be a graph with $V(G)=[n]$. A path $\pi: i=i_{0},i_{1},\ldots,i_{r}=j$ from $i$ to $j$ with $i<j$ in $G$ is said to be an \textit{admissible path} if the following hold:
\begin{enumerate}
\item $i_{k}\neq i_{l}$ for $k\neq l$;

\item For each $k\in\{1,\ldots,r-1\}$, either $i_{k}<i$ or $i_{k}>j$;

\item The induced subgraph of $G$ on the vertex set $\{i_{0},\ldots,i_{r}\}$ has no induced cycle.
\end{enumerate}
}
\end{definition}

\begin{remark}{\rm
Corresponding to an admissible path $\pi: i=i_{0},i_{1},\ldots,i_{r}=j$ from $i$ to $j$ with $i<j$ in $G$, we associate the monomial
$$ u_{\pi}=\bigg(\prod_{i_{k}>j} x_{i_{k}}\bigg)\bigg(\prod_{i_{l}<i} y_{i_{l}}\bigg).$$
Then $\mathcal{G}=\{u_{\pi}f_{ij}\mid \pi\,\, \text{is an admissible path from}\,\, i\,\, \text{to}\,\, j\,\, \text{with}\,\, i<j\}$ is a reduced Gr\"{o}bner basis of $J_{G}$ with respect to $<$ by \cite[Theorem 2.1]{hhhrkara}. Therefore, 
$$G(\mathrm{in}_{<}(J_{G}))=\{u_{\pi}x_{i}y_{j}\mid \pi\,\, \text{is an admissible path from}\,\, i\,\, \text{to}\,\, j\,\, \text{with}\,\, i<j\}.$$
}
\end{remark}

A graph is said to be \textit{complete} if there is an edge between every pair of vertices, and we denote the complete graph on $n$ vertices by $K_{n}$. A vertex $v\in V(G)$ is called a \textit{free vertex} of $G$ if the induced subgraph of $G$ on $\mathcal{N}_{G}(v)$ is complete. A graph $G$ is said to be 
\textit{decomposable} into $G_{1}$ and $G_{2}$, if $G=G_{1}\cup G_{2}$, with $V(G_{1})\cap V(G_{2})=\{v\}$, such that $v$ is a free vertex of both $G_{1}$ and $G_{2}$.
\medskip

A vertex $v\in V(G)$ is said to be a \textit{cut vertex} of $G$, 
if removal of $v$ from $G$ increases the number of connected components. Let $G$ be a graph on the vertex set $V(G)=[n]$. A set $T\subseteq [n]$ is said to be a \textit{cutset} of $G$ if each $t\in T$ is a cut vertex of $G\setminus (T\setminus\{t\})$. We denote by $\mathscr{C}(G)$ the set of all cutsets of $G$. For $T\subseteq [n]$, we denote the number of connected components of the graph $G\setminus T$ by $c_{G}(T)$ (or sometimes by $c(T)$ if the graph is clearly understood from the context). Let $G_{1},\ldots,G_{c(T)}$ be the connected components of $G\setminus T$. For each $G_{i}$, we denote by $\tilde{G_{i}}$, the 
complete graph on the vertex set $V(G_{i})$. We set
$$ P_{T}(G)=\left\langle \bigcup_{i\in T}\lbrace x_{i},y_{i}\rbrace, J_{\tilde{G_{1}}},\ldots,J_{\tilde{G}_{c(T)}}\right\rangle.$$
Then $P_{T}(G)$ is a prime ideal. By \cite[Corollary 2.2]{hhhrkara}, $J_{G}$ is a radical ideal and from \cite[Corollary 3.9]{hhhrkara}, the minimal primary decomposition of $J_{G}$ is 
$$J_{G}=\bigcap_{T\in\mathscr{C}(G)} P_{T}(G).$$

From \cite[Lemma 3.1]{hhhrkara},  $\mathrm{ht}(P_{T}(G))=n+\vert T\vert -c(T)$. 
Since $\phi\in \mathscr{C}(G)$, $J_{G}$ is unmixed (i.e., heights of all minimal 
primes of $J_{G}$ are the same) if and only if $c(T)=\vert T\vert +c$, for every 
$T\in \mathscr{C}(G)$, where $c$ denotes the number of connected components of $G$. Therefore, if $J_{G}$ is unmixed, then $\mathrm{dim}(\mathcal{S}/J_{G})=n+c$. It follows from \cite[Proposition 2.1]{raufrin}, 
that, $v$ is a free vertex of $G$ if and only if $v\not\in T$ for all $T\in\mathscr{C}(G)$.

\begin{definition}[\cite{acc}, Definition 2.2]\label{defacc}{\rm
Let $G$ be a graph. A cutset $T\in \mathscr{C}(G)$ is said to be \textit{accessible} 
if there exists $t\in T$ such that $T\setminus\{t\}\in \mathscr{C}(G)$. The graph $G$ is said to be \textit{accessible} if $J_{G}$ is unmixed and every non-empty cutset $T\in\mathscr{C}(G)$ is accessible. 
}
\end{definition}

\begin{remark}{\rm
For a graph $G$ with connected components $G_{1},\ldots, G_{r}$, $J_{G}$ is 
unmixed (resp. Cohen-Macaulay) if and only if $J_{G_{i}}$ is 
unmixed (resp. Cohen-Macaulay) for each $i=1,\ldots,r$. So, we work with the blanket 
assumption that every graph is connected.
}
\end{remark}

\begin{remark}\label{remglu}{\rm
Let $G=G_{1}\cup G_{2}$, with $V(G_{1})\cap V(G_{2})=\{v\}$, be a decomposable graph, 
where $v$ is a free vertex of both $G_{1}$ and $G_{2}$. Then $J_{G}$ is unmixed (resp. Cohen-Macaulay) if and only if $J_{G_{i}}$ is 
unmixed (resp. Cohen-Macaulay) for each $i=1,2$ (see \cite{raufrin}).
}
\end{remark}

\section{Cohen-Macaulay Property of $J_{G}$ using Initial Ideal}\label{cminitial}
In this section, we study the Cohen-Macaulay property of binomial edge ideals in terms of 
some smaller graphs arising from the blocks of a given graph. We show that it is enough to 
investigate biconnected graphs attached with some whiskers for the classification of 
Cohen-Macaulay binomial edge ideals, if we assume 
a hypothesis. Due to Remark \ref{remcmin}, we prove our most of the results by looking into the initial ideals of binomial edge ideals. The following lemma is one of the key tools of this section.

\begin{lemma}\label{propcmcolon}
Let $I\subseteq R$ be a monomial ideal, and $f\in R$ be an arbitrary monomial. If $R/I$ is Cohen-Macaulay, then $R/(I:f)$ is Cohen-Macaulay with $\mathrm{depth}(R/(I:f))=\mathrm{depth}(R/I)$.
\end{lemma}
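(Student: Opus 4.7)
The plan is to combine the short exact sequence
\begin{equation*}
0 \longrightarrow R/(I:f) \xrightarrow{\cdot f} R/I \longrightarrow R/(I+(f)) \longrightarrow 0
\end{equation*}
with a reduction to $f$ being a single variable and an induction on $d := \mathrm{depth}(R/I) = \dim(R/I)$. Since $I:(gh) = (I:g):h$ for monomials $g, h$, induction on the total degree of $f$ reduces to the case $f = x_i$ for some variable. In the short exact sequence above, the embedding $R/(I:f) \hookrightarrow R/I$ gives $\mathrm{Ass}(R/(I:f)) \subseteq \mathrm{Ass}(R/I)$. Because $R/I$ is Cohen-Macaulay and hence unmixed, every associated prime of $R/(I:f)$ has the common dimension $d$, so $R/(I:f)$ is equidimensional of dimension $d$ and $\mathrm{depth}(R/(I:f)) \leq d$ comes for free.

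For the reverse inequality I would induct on $d$; the base case $d = 0$ is immediate since $R/(I:f)$ then embeds in an Artinian module. For the inductive step, after possibly enlarging the field, prime avoidance furnishes a linear form $\theta$ that is a nonzerodivisor on both $R/I$ and $R/(I+(f))$. Applying the functor $\mathrm{Hom}(R/(f), -)$ to the short exact sequence $0 \to R/I \xrightarrow{\theta} R/I \to R/(I+(\theta)) \to 0$ and using the identifications $\mathrm{Hom}(R/(f), R/J) \cong (J:f)/J$ and $\mathrm{Ext}^1(R/(f), R/J) \cong R/(J+(f))$, the injectivity of multiplication by $\theta$ on $R/(I+(f))$ yields the key identity
\begin{equation*}
(I+(\theta)):f = (I:f) + (\theta).
\end{equation*}
Applying the inductive hypothesis to $R/(I+(\theta))$ (Cohen-Macaulay of depth $d-1$) and $f$ gives that $R/((I+(\theta)):f) = R/((I:f) + (\theta))$ is Cohen-Macaulay of depth $d-1$. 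Since $\theta$ also lies outside $\mathrm{Ass}(R/(I:f)) \subseteq \mathrm{Ass}(R/I)$, it is a nonzerodivisor on $R/(I:f)$, and the short exact sequence $0 \to R/(I:f) \xrightarrow{\theta} R/(I:f) \to R/((I:f)+(\theta)) \to 0$ promotes the previous conclusion to $R/(I:f)$ itself being Cohen-Macaulay of depth $d$.

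The main obstacle is the key identity $(I+(\theta)):f = (I:f) + (\theta)$: the inclusion $\supseteq$ is formal, but the reverse inclusion genuinely requires $\theta$ to be a nonzerodivisor on $R/(I+(f))$ and not merely on $R/I$, as the example $I = 0$, $f = x^2$, $\theta = x$ in $K[x,y]$ shows. Prime avoidance dispatches this because $R/(I+(f))$ has only finitely many associated primes. A further subtlety is that the induction is applied to the non-monomial ideal $I+(\theta)$, so the argument is really proving the lemma for all graded Cohen-Macaulay ideals; nothing in the proof uses monomiality beyond the initial set-up, so this extension is automatic.
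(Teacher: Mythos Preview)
Your approach is genuinely different from the paper's. The paper's proof is a two-line argument: from the short exact sequence it quotes $\dim(R/I)\geq\dim(R/(I{:}f))$, and then invokes \cite[Lemma~4.1]{chhktt19} to obtain $\mathrm{depth}(R/(I{:}f))\geq\mathrm{depth}(R/I)$ directly, without any induction. Your attempt to replace that citation by an explicit induction on $d$ via reduction modulo a generic linear form $\theta$ is a natural idea, but it has a genuine gap.

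The problem is the sentence ``prime avoidance furnishes a linear form $\theta$ that is a nonzerodivisor on both $R/I$ and $R/(I+(f))$''. Prime avoidance over an infinite field gives a linear form avoiding any finite set of graded primes \emph{provided none of them is the irrelevant ideal $\mathfrak{m}$}. You have not ruled out $\mathfrak{m}\in\mathrm{Ass}(R/(I+(f)))$, and it can happen: with $I=(y^2,z^2)\subseteq k[x,y,z]$ (Cohen--Macaulay of dimension $1$) and $f=x$, the ring $R/(I+(f))$ is Artinian, so no such $\theta$ exists. You might object that here $I{:}f=I$ and the conclusion is trivial; but once the induction descends to the non-monomial ideal $I+(\theta)$ and a general $f$, one has examples such as $I=(xy,xz,yz)\subseteq k[x,y,z]$ with $f=x+y$, where $f$ is a genuine zero-divisor, $I{:}f=(xy,z)\neq I$, and still $\mathrm{depth}(R/(I+(f)))=0$.

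More seriously, for $d\geq 2$ the missing condition $\mathrm{depth}(R/(I+(f)))\geq 1$ is essentially equivalent to what you are trying to prove. From the short exact sequence and the depth lemma one has $\mathrm{Hom}(k,R/(I+(f)))\hookrightarrow\mathrm{Ext}^1(k,R/(I{:}f))$ whenever $\mathrm{depth}(R/I)\geq 2$; hence $\mathrm{depth}(R/(I+(f)))\geq 1$ holds if and only if $\mathrm{depth}(R/(I{:}f))\geq 2$. So the existence of your $\theta$ already presupposes the inductive conclusion, and the argument is circular. The case $d=1$ can be rescued directly from the depth lemma, but for $d\geq 2$ you would need an independent reason why $\mathfrak{m}\notin\mathrm{Ass}(R/(I+(f)))$, and none is given.
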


\begin{proof}
Let the degree of $f$ be $k$. Consider the following exact sequence:
$$0\longrightarrow R/(I:f)[-k] \overset{f}{\longrightarrow} R/I\longrightarrow R/\big<I,f\big>\longrightarrow 0.$$
We have $\mathrm{dim}(R/I)=\mathrm{max}\{\mathrm{dim}(R/(I:f)),\mathrm{dim}(R/\big<I,f\big>)\}$. Using \cite[Lemma 4.1]{chhktt19}, we get $\mathrm{depth}(R/(I:f))\geq \mathrm{depth}(R/I)=\mathrm{dim}(R/I)\geq \mathrm{dim}(R/(I:f))$, and hence the result follows.
\end{proof}

For a graph $G$ and $v\in V(G)$, it follows from \cite[Lemma 4.5]{acc} that $J_{G}$ is unmixed implies $J_{G_{v}}$ is unmixed and $G$ is accessible implies $G_{v}$ is accessible. The following Proposition \ref{propGv} is the extension of these results for the Cohen-Macaulay property of $J_{G}$.

\begin{proposition}\label{propGv}
Let $G$ be a graph and $v\in V(G)$. If $J_{G}$ is Cohen-Macaulay, then $J_{G_{v}}$ is Cohen-Macaulay.
\end{proposition}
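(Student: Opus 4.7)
The plan is to pass from $J_G$ and $J_{G_v}$ to their initial ideals via Remark \ref{remcmin} and then exploit Lemma \ref{propcmcolon} through a carefully chosen colon identity. Since $J_G$ is Cohen-Macaulay if and only if $\mathrm{in}_{<}(J_G)$ is, and similarly for $J_{G_v}$, it suffices to prove: if the square-free monomial ideal $\mathrm{in}_{<}(J_G)$ is Cohen-Macaulay, then so is $\mathrm{in}_{<}(J_{G_v})$. First I would dispose of the trivial case: if $v$ is a free vertex of $G$, then $\mathcal{N}_G(v)$ is already a clique in $G$, so $G_v = G$ and there is nothing to show. Henceforth assume $v$ is non-free.

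The central technical claim I would aim for is the colon identity
\[
\mathrm{in}_{<}(J_{G_v}) : x_v \;=\; \mathrm{in}_{<}(J_G) : x_v.
\]
The inclusion $\supseteq$ follows immediately from $J_G \subseteq J_{G_v}$. For the reverse inclusion, the key structural observation is that $\mathcal{N}_{G_v}(v)$ is a clique in $G_v$, so $v$ cannot appear as an internal vertex of any admissible path in $G_v$ (two of its neighbors on such a path would close a triangle through $v$); hence every admissible path in $G_v$ either avoids $v$ or has $v$ as an endpoint. A case-by-case check then matches each minimal generator of $\mathrm{in}_{<}(J_{G_v}) : x_v$ with an element of $\mathrm{in}_{<}(J_G) : x_v$. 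In particular, each ``new'' edge-generator $x_i y_j$ with $i,j \in \mathcal{N}_G(v)$, $i<j$, belongs to $\mathrm{in}_{<}(J_G) : x_v$ because $x_v \cdot x_iy_j$ is divisible by an edge-generator of $\mathrm{in}_{<}(J_G)$ incident with $v$ (the ``short-cut'' through $v$).

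Once the identity is established, Lemma \ref{propcmcolon} yields that $\mathcal{S}/(\mathrm{in}_{<}(J_{G_v}) : x_v)$ is Cohen-Macaulay with
\[
\mathrm{depth}\!\left(\mathcal{S}/(\mathrm{in}_{<}(J_{G_v}) : x_v)\right) \;=\; \mathrm{depth}\!\left(\mathcal{S}/\mathrm{in}_{<}(J_G)\right) \;=\; n+1.
\]
To pass from this to Cohen-Macaulayness of $\mathcal{S}/\mathrm{in}_{<}(J_{G_v})$, I would combine the short exact sequence
\[
0 \longrightarrow \mathcal{S}/(\mathrm{in}_{<}(J_{G_v}) : x_v)(-1) \xrightarrow{\,x_v\,} \mathcal{S}/\mathrm{in}_{<}(J_{G_v}) \longrightarrow \mathcal{S}/(\mathrm{in}_{<}(J_{G_v}), x_v) \longrightarrow 0
\]
with the parallel identity $(\mathrm{in}_{<}(J_{G_v}), x_v) : y_v = (\mathrm{in}_{<}(J_G), x_v) : y_v$ (proved by the same clique/endpoint analysis applied to $y_v$), and then invoke Lemma \ref{lemheuneke} applied to $\mathrm{in}_{<}(J_{G_v})$ with $\Delta = \{x_v, y_v\}$. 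The three resulting expressions for $\mathrm{depth}(\mathcal{S}/\mathrm{in}_{<}(J_{G_v}))$ reduce, via the two colon identities and Lemma \ref{propcmcolon}, to the known value $n+1$, which matches $\dim(\mathcal{S}/\mathrm{in}_{<}(J_{G_v}))$ and gives Cohen-Macaulayness.

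The main obstacle I expect is the reverse inclusion in the colon identity. Admissible paths in $G_v$ may legitimately use new edges that do not exist in $G$, and the resulting monomial generators have no direct source in $\mathrm{in}_{<}(J_G)$. The heart of the argument is a ``short-cut'' correspondence, replacing every use of a new edge $\{i,j\}$ in a $G_v$-admissible path with the two $G$-edges $\{i,v\}, \{v,j\}$, and checking that after applying the colon by $x_v$ the resulting monomial is already forced into $\mathrm{in}_{<}(J_G) : x_v$. Making this correspondence precise for long admissible paths that may traverse several new edges, and ensuring the bookkeeping of the factors $u_\pi$ works out at each step, is the delicate step of the proof.
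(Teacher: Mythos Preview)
Your approach overlaps with the paper's in spirit (pass to initial ideals, use Lemma~\ref{propcmcolon}), but it misses the one decisive simplification and, as written, the final step does not close.

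The gap is in the use of Lemma~\ref{lemheuneke}. That lemma tells you $\mathrm{depth}(\mathcal{S}/\mathrm{in}_{<}(J_{G_v}))$ equals \emph{one} of three quantities; to force the value $n+1$ you must control all three. Your two colon identities, even if granted, handle only the first two branches. The third branch is $\mathcal{S}/\langle \mathrm{in}_{<}(J_{G_v}), x_v, y_v\rangle$, which equals $\mathcal{S}/\langle \mathrm{in}_{<}(J_{G_v\setminus\{v\}}), x_v, y_v\rangle$. Nothing in the hypotheses controls $J_{G_v\setminus\{v\}}$; worse, since $G_v\setminus\{v\}$ is connected when $v$ is non-free, Cohen--Macaulayness there would give depth $n$, not $n+1$, so this branch can genuinely be strictly smaller and Lemma~\ref{lemheuneke} no longer pins down the depth. (Your second colon identity has the same problem one level down: to exploit $(\langle\mathrm{in}_{<}(J_G),x_v\rangle:y_v)$ via Lemma~\ref{propcmcolon} you would first need $\langle\mathrm{in}_{<}(J_G),x_v\rangle$ to be Cohen--Macaulay, which is not known.)

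The paper avoids all of this with a labeling trick you did not use: relabel so that $v=n$ is the \emph{largest} vertex and $\mathcal{N}_G(n)=\{n-1,\dots,n-r\}$. With this choice one gets the \emph{stronger} identity
\[
\bigl(\mathrm{in}_{<}(J_G):x_n\bigr)\;=\;\mathrm{in}_{<}(J_{G_n})
\]
(not merely equality of the two colon ideals). The point is that, with $n$ maximal, $n$ can appear in an admissible path of $G$ only as an internal vertex contributing the factor $x_n$, and dividing that factor out exactly produces the admissible-path monomials of $G_n$; conversely $n$ is free in $G_n$, so it is never internal there and $x_n$ never occurs in $\mathrm{in}_{<}(J_{G_n})$. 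Given this identity, a single application of Lemma~\ref{propcmcolon} finishes the proof: $\mathrm{in}_{<}(J_G)$ Cohen--Macaulay implies $\mathrm{in}_{<}(J_G):x_n=\mathrm{in}_{<}(J_{G_n})$ Cohen--Macaulay, and Remark~\ref{remcmin} transfers this back to $J_{G_n}$. No exact sequences, no Lemma~\ref{lemheuneke}, and no need to touch $G_v\setminus\{v\}$.
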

\begin{proof}
Let $V(G)=\{1,\ldots,n\}$. Since labelling of vertices does not affect the properties of $\mathcal{S}/J_{G}$, we can choose a labelling of vertices of $G$ in such a way that $v=n$ and $\mathcal{N}_{G}(n)=\{n-1,\ldots, n-r\}$. By looking into the admissible paths in $G$, it is easy to observe that $(\mathrm{in}_{<}(J_{G}):x_{n})=\mathrm{in}_{<}(J_{G_{n}})$. Now, $J_{G}$ is Cohen-Macaulay implies $\mathrm{in}_{<}(J_{G})$ is Cohen-Macaulay by Remark \ref{remcmin} and therefore, we get $\mathrm{in}_{<}(J_{G_{n}})$ is Cohen-Macaulay using Lemma \ref{propcmcolon}. Hence, by Remark \ref{remcmin}, $J_{G_{n}}$ is Cohen-Macaulay.
\end{proof}

Let $G$ be a simple graph. Let $T\in \mathscr{C}(G)$ and $G_{1},\ldots,G_{c(T)}$ be the connected components of $G\setminus T$. For $\mathbf{v}=(v_{1},\ldots,v_{c(T)})\in V(G_{1})\times\cdots\times V(G_{c(T)})$, we consider the following prime ideal:
$$P_{T}(\mathbf{v})=\big<x_{i},y_{i}\mid i\in T\big>+ \sum_{k=1}^{c(T)}\big< \{x_{i},y_{j}\mid i,j\in V(G_{k}), i<v_{k}, j>v_{k}\}\big>.$$
By \cite[Lemma 1]{s2acc}, we get
$$\mathrm{in}_{<}(P_{T}(G))=\bigcap_{\mathbf{v}\in V(G_{1})\times\cdots\times V(G_{c(T)})} P_{T}(\mathbf{v}).$$
Since $\mathrm{in}_{<}(J_{G})$ is radical, by \cite[Corollary 1.12]{cdg18}, we have 
$$\mathrm{in}_{<}(J_{G})=\bigcap_{T\in\mathscr{C}(G)}  \mathrm{in}_{<}(P_{T}(G)).$$

\begin{proposition}
Let $G$ be a simple graph. Then
$$\mathrm{Ass}(\mathrm{in}_{<}(J_{G}))=\{P_{T}(\mathbf{v})\mid T\in \mathscr{C}(G)\,\, \text{and}\,\, \mathbf{v}\in V(G_{1})\times\cdots\times V(G_{c(T)})\}.$$
\end{proposition}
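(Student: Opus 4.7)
The plan is to exploit the decomposition $\mathrm{in}_<(J_G) = \bigcap_{T,\mathbf{v}} P_T(\mathbf{v})$ displayed just above, together with the squarefreeness of $\mathrm{in}_<(J_G)$ (every generator $u_{\pi} x_i y_j$ is squarefree), which yields $\mathrm{Ass}(\mathrm{in}_<(J_G)) = \mathrm{Min}(\mathrm{in}_<(J_G))$. The minimal primes of $\mathrm{in}_<(J_G)$ are then exactly the minimal elements of $\{P_T(\mathbf{v})\}$ under inclusion, which gives one inclusion immediately and reduces the proposition to the purely combinatorial claim that no $P_T(\mathbf{v})$ is properly contained in another $P_{T'}(\mathbf{v}')$ for distinct pairs $(T,\mathbf{v}) \neq (T',\mathbf{v}')$.

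To establish this claim I will assume a containment $P_T(\mathbf{v}) \subseteq P_{T'}(\mathbf{v}')$. First I would argue $T \subseteq T'$: each $i \in T$ contributes both $x_i$ and $y_i$ to $P_T(\mathbf{v})$, and $P_{T'}(\mathbf{v}')$ contains both only when $i \in T'$, since for a vertex inside a component of $G \setminus T'$ at most one of $x_i, y_i$ is a generator of $P_{T'}(\mathbf{v}')$. When $T = T'$, picking $k$ with $v_k \neq v'_k$ and (WLOG) $v_k < v'_k$, the variable $y_{v'_k}$ lies in $P_T(\mathbf{v})$ but not in $P_{T'}(\mathbf{v}')$, a direct contradiction.

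The substantive case is $T \subsetneq T'$. I would choose $j \in T' \setminus T$, let $G_k$ be the component of $G \setminus T$ containing $j$, and set $S_k = (T' \setminus T) \cap V(G_k)$, so that $j \in S_k$ and the components of $G \setminus T'$ inside $V(G_k)$ are precisely the connected components of $G_k \setminus S_k$. For any such subcomponent $C$ with distinguished vertex $v'_C$, the containment forces, for every $i \in V(C)$ with $i < v_k$, that $i < v'_C$, and symmetrically for $i > v_k$. Hence $\max\{i \in V(C) : i < v_k\} < v'_C < \min\{i \in V(C) : i > v_k\}$, which since $v'_C \in V(C)$ is satisfiable only if $v_k \in V(C)$ (and then forces $v'_C = v_k$). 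Consequently every subcomponent of $G_k \setminus S_k$ must contain $v_k$, so $G_k \setminus S_k$ is in fact connected.

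The cutset hypothesis on $T'$ now closes the argument: since $j \in T'$, by the standard characterisation of cutsets $j$ has neighbours in at least two distinct connected components of $G \setminus T'$; because $T \subseteq T'$, these neighbours all lie in $V(G_k) \setminus S_k$, forcing $G_k \setminus S_k$ to have at least two connected components, a contradiction. The main obstacle I anticipate is the inequality analysis for $v'_C$ in the preceding paragraph -- one has to be careful about whether $v_k \in S_k$, whether a given subcomponent contains $v_k$, and the degenerate case $S_k = V(G_k)$ (which the cutset property of $T'$ rules out since $j$ would then have no neighbours outside $T'$) -- but once the chain $\max < v'_C < \min$ is correctly extracted from the containment, the remainder is a clean invocation of the cutset structure.
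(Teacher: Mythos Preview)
Your proposal is correct and follows the same overall reduction as the paper: since $\mathrm{in}_{<}(J_G)$ is square-free, its associated primes coincide with its minimal primes, so everything reduces to showing that no $P_T(\mathbf{v})$ is contained in another. Your treatment of the cases $T \not\subseteq T'$ and $T = T'$ is essentially identical to the paper's.

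Where you diverge is in the strict case $T \subsetneq T'$ (your notation). You argue by contradiction through an order analysis on the distinguished vertices $v'_C$ of the sub-components $C$ of $G_k \setminus S_k$, deducing that each such $C$ must contain $v_k$, hence that $G_k \setminus S_k$ is connected, and then invoke the cutset property of $T'$ at $j$ to force a disconnection. The paper's argument here is shorter and more direct: in its notation with $T' \subsetneq T$, it picks $i \in T \setminus T'$, uses the cutset property of $T$ to find two components $G_{i_1}, G_{i_2}$ of $G \setminus T$ lying inside a single component $G'_k$ of $G \setminus T'$, and observes that the distinguished vertices $p = v_{i_1}$, $q = v_{i_2}$ both satisfy $x_p, y_p, x_q, y_q \notin P_T(\mathbf{v})$, whereas at most one of $p,q$ can equal the distinguished vertex $v'_k$ on the $T'$-side --- so the other immediately supplies a generator of $P_{T'}(\mathbf{v}')$ not in $P_T(\mathbf{v})$. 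This two-free-vertices-in-one-component trick sidesteps the inequality chain entirely; your route reaches the same conclusion but with more moving parts, and the boundary cases you flag ($v_k \in S_k$, $S_k = V(G_k)$) are genuinely needed for your version to go through.
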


\begin{proof}
It is enough to prove that no $P_{T}(\mathbf{v})$ contains each other. Let $T,T^{'}$ be two different cutsets of $G$. Then there exists $i\in T$ such that $i\not\in T^{'}$ or there exists $i^{'}\in T^{'}$ such that $i^{'}\not\in T$. Without loss of generality, we assume there exists $i\in T$ such that $i\not\in T^{'}$. Then $x_{i},y_{i}\in P_{T}(\mathbf{v})$ for any $\mathbf{v}\in V(G_{1})\times\cdots\times V(G_{c(T)})$ but $x_{i},y_{i}$ both can not belong to any $P_{T^{'}}(\mathbf{v}^{'})$. Thus $P_{T}(\mathbf{v})\nsubseteq P_{T^{'}}(\mathbf{v}^{'})$. If there exists $i^{'}\in T^{'}$ such that $i^{'}\not\in T$, then $P_{T}(\mathbf{v}^{'})\not\subseteq P_{T}(\mathbf{v})$ and we are done. Let $T^{'}\subsetneq T$. Then one of connected components of $G\setminus T^{'}$, say $G_{k}^{'}$, contains $i$ as $i\not\in T^{'}$. Then $G\setminus T$ will have at least two connected components $G_{i_{1}}$ and $G_{i_{2}}$ such that $V(G_{i_{1}})\sqcup V(G_{i_{2}})\subseteq V(G_{k}^{'})\setminus\{i\}$. For any $P_{T}(\mathbf{v})$, there will be $p\in V(G_{i_{1}})$ and $q\in V(G_{i_{2}})$ such that $x_{p},y_{p},x_{q},y_{q}\not\in P_{T}(\mathbf{v})$. But, at least one of $x_{p},y_{p},x_{q},y_{q}$ belong to $P_{T^{'}}(\mathbf{v}^{'})$ as $p,q\in V(G_{k}^{'})$. Thus, $P_{T^{'}}(\mathbf{v}^{'})\nsubseteq P_{T}(\mathbf{v})$ for all $\mathbf{v}\in V(G_{1})\times\cdots\times V(G_{c(T)})$. Now, we will show $P_{T}(\mathbf{u})$ and $P_{T}(\mathbf{v})$ do not contain each other for any two different $\mathbf{u},\mathbf{v}\in V(G_{1})\times\cdots\times V(G_{c(T)})$. Since $\mathbf{u}$ and $\mathbf{v}$ are different, $u_{k}\neq v_{k}$ for some $k\in \{1,\ldots c(T)\}$. Then $x_{u_{k}},y_{u_{k}}\not\in P_{T}(\mathbf{u})$, but one of $x_{u_{k}}$ and $y_{u_{k}}$ belong to $P_{T}(\mathbf{v})$. Therefore, $P_{T}(\mathbf{v})\nsubseteq P_{T}(\mathbf{u})$. Similarly, $P_{T}(\mathbf{u})\nsubseteq P_{T}(\mathbf{v})$. Hence, for all $T\in\mathscr{C}(G)$ and all $\mathbf{v}\in V(G_{1})\times\cdots\times V(G_{c(T)})$, $P_{T}(\mathbf{v})$ is a minimal prime of $\mathrm{in}_{<}(J_{G})$ and the result follows.
\end{proof}

\begin{lemma}\label{lemunmin}
Let $G$ be a graph. Then $\mathrm{ht}(P_{T}(\mathbf{v}))=n+\vert T\vert -c_{G}(T)$. Moreover, $J_{G}$ is unmixed if and only if $\mathrm{in}_{<}(J_{G})$ is unmixed.
\end{lemma}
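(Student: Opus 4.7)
The plan is to prove both assertions by a direct count of the variable generators of $P_T(\mathbf{v})$, then combine this with the description of $\mathrm{Ass}(\mathrm{in}_<(J_G))$ from the previous proposition.

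First I would compute $\mathrm{ht}(P_T(\mathbf{v}))$. Since $P_T(\mathbf{v})$ is a prime ideal generated by a subset of the variables of $\mathcal{S}$, its height equals the number of distinct variables among its generators. The set $\{x_i, y_i \mid i \in T\}$ contributes $2|T|$ variables. For each connected component $G_k$ of $G \setminus T$, the summand $\langle x_i, y_j \mid i,j \in V(G_k),\ i < v_k,\ j > v_k \rangle$ contributes exactly the $x_i$ with $i \in V(G_k) \setminus \{v_k\}$ lying below $v_k$ and the $y_j$ with $j \in V(G_k) \setminus \{v_k\}$ lying above $v_k$; these are disjoint and together account for precisely $|V(G_k)| - 1$ variables (all vertices of $G_k$ except $v_k$ appear, each via exactly one of $x$ or $y$). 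Summing,
\[
\mathrm{ht}(P_T(\mathbf{v})) = 2|T| + \sum_{k=1}^{c_G(T)} \bigl(|V(G_k)| - 1\bigr) = 2|T| + (n - |T|) - c_G(T) = n + |T| - c_G(T).
\]
The key observation is that this quantity depends only on $T$, not on $\mathbf{v}$.

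Next I would turn to the unmixedness equivalence. By the previous proposition,
\[
\mathrm{Ass}(\mathrm{in}_{<}(J_G)) = \{P_T(\mathbf{v}) \mid T \in \mathscr{C}(G),\ \mathbf{v} \in V(G_1) \times \cdots \times V(G_{c(T)})\},
\]
and by \cite[Corollary 3.9]{hhhrkara} the minimal primes of $J_G$ are exactly $\{P_T(G) \mid T \in \mathscr{C}(G)\}$ with $\mathrm{ht}(P_T(G)) = n + |T| - c_G(T)$. Combining this with the first part, the set of heights appearing in $\mathrm{Ass}(\mathrm{in}_<(J_G))$ coincides with the set of heights appearing among the minimal primes of $J_G$, namely $\{n + |T| - c_G(T) \mid T \in \mathscr{C}(G)\}$. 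Therefore $\mathrm{in}_<(J_G)$ is unmixed if and only if this set is a singleton, if and only if $J_G$ is unmixed.

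There is no real obstacle here; the statement is essentially a bookkeeping lemma. The only point that requires a little care is verifying that the second summand for each component $G_k$ really contributes $|V(G_k)| - 1$ fresh variables (and that there is no accidental overlap with the variables indexed by $T$, which is clear since $V(G_k) \cap T = \emptyset$). Once the height formula is established, the unmixedness equivalence is immediate from the description of $\mathrm{Ass}(\mathrm{in}_<(J_G))$ proved just above and the standard minimal primary decomposition of $J_G$.
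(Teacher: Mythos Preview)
Your proposal is correct and follows essentially the same approach as the paper: both compute $\mathrm{ht}(P_T(\mathbf{v}))$ by counting the variable generators as $2|T|+\sum_{k=1}^{c_G(T)}(|V(G_k)|-1)=n+|T|-c_G(T)$, and then deduce the unmixedness equivalence from the fact that this equals $\mathrm{ht}(P_T(G))$ together with the description of $\mathrm{Ass}(\mathrm{in}_<(J_G))$ in the preceding proposition. Your write-up simply spells out a bit more explicitly why each component contributes $|V(G_k)|-1$ fresh variables.
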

\begin{proof}
Consider any $P_{T}(\mathbf{v})\in \mathrm{Ass}(\mathrm{in}_{<}(J_{G}))$. From the construction of $P_{T}(\mathbf{v})$, we have
\begin{align*}
\mathrm{ht}(P_{T}(\mathbf{v}))&=\vert G(P_{T}(\mathbf{v}))\vert\\\
 &= 2 \vert T\vert +\sum_{k=1}^{c_{G}(T)}(\vert V(G_{k})\vert-1)\\
& =2\vert T\vert +n-\vert T\vert -c_{G}(T)\\
&= n+\vert T\vert -c_{G}(T).
\end{align*}
Since $\mathrm{ht}(P_{T}(G))=n+\vert T\vert -c_{G}(T)$ for all $T\in \mathscr{C}(G)$, $J_{G}$ is unmixed if and only if $\mathrm{in}_{<}(J_{G})$ is unmixed.
\end{proof}

Now, we are going to prove our main results and some of the results are dependent on the following hypothesis. 
\begin{hypothesis}\label{hypo}
    Let $G$ be a graph and $v\in V(G)$ be a cut vertex of $G$. If $J_{G}$ is Cohen-Macaulay and $J_{G\setminus \{v\}}$ is unmixed, then $J_{G\setminus\{v\}}$ is Cohen-Macaulay.
\end{hypothesis}

The above hypothesis is true for strongly unmixed binomial edge ideals by definition (see \cite[Definition 5.6]{acc}. Although, we strongly believe that Hypothesis \ref{hypo} is true in general, we are not able to fix it. So, we leave it as an open problem in Section \ref{secprob}. Note that if \cite[Conecjture 1.1]{acc} is affirmative, then Hypothesis \ref{hypo} naturally holds.

\begin{setup}\label{setup}{\rm
Let $G=G_{1}\cup G_{2}$ be a connected graph such that $V(G_{1})\cap V(G_{2})=\{v\}$. Consider the graph $\overline{G_{i}}$ by attaching a whisker to the graph $G_{i}$ at the vertex $v$ for $i=1,2$. Let $V(G)=\{1,\ldots,n\}$. Labelling of vertices does not affect the corresponding quotient ring of the binomial edge ideal of a graph. We relabel the vertices of $G$ such that 
\begin{enumerate}
\item[$\bullet$] $V(G_{1})=\{1,\ldots,m\}$ and $\mathcal{N}_{G_{1}}(m)=\{m-1,\ldots, m-r\}$;

\item[$\bullet$] $V(G_{2})=\{m,m+1,\ldots,n\}$ and $\mathcal{N}_{G_{2}}(m)=\{m+1,\ldots, m+s\}$.
\end{enumerate}

\noindent Note that here, $v=m$. We may assume 

\begin{enumerate}
\item[$\bullet$] $V(\ov{G_{1}})=\{1,\ldots,m+1\}$ and $E(\ov{G_{1}})=E(G_{1})\cup \{\{m,m+1\}\}$;

\item[$\bullet$] $V(\ov{G_{2}})=\{m-1,\ldots,n\}$ and $E(\ov{G_{2}})=E(G_{2})\cup \{\{m-1,m\}\}$.
\end{enumerate}

Let $I=\mathrm{in}_{<}(J_{G})$, $I_{i}=\mathrm{in}_{<}(J_{\ov{G_{i}}})$ and $J_{i}=\mathrm{in}_{<}(J_{G_{i}\setminus \{m\}})$ for  $i=1,2$. We write $\mathcal{S}=K[x_{j},y_{j}\mid j\in V(G)]$, $\mathcal{S}_{i}=K[x_{j},y_{j}\mid j\in V(\ov{G_{i}})]$, $A_{i}=K[x_{j},y_{j}\mid j\in V(G_{i}\setminus\{m\})]$, where $i=1,2$. By our choice of labelling, we have the following:
\begin{enumerate}
\item $G(I)=G(\mathrm{in}_{<}(J_{G_{1}}))\cup G(\mathrm{in}_{<}(J_{G_{2}}))$.
\item $\big<I_{1},x_{m}\big>=\big<I_{1}^{'},x_{m}\big>$, where $I_{1}^{'}\subseteq A_{1}[y_{m}]$ is an ideal.
\item $(I_{1}:x_{m})=\mathrm{in}_{<}(J_{(G_{1})_{m}}) + \big<y_{m+1}\big>$.
\item $(\big<I_{1},x_{m}\big>:y_{m})=I_{1}^{''}+\big<x_{m}\big>$, where $I_{1}^{''}$ is an ideal of $A_{1}$ containing $\{x_{m-1},\ldots,x_{m-r}\}$.
(Note that if $i=i_0,i_1,\ldots,i_r=j$ is an admissible path in $\ov{G_1}$, with length greater than $3$, then no $i_k<i$ can be $m$ and $i_r\neq m$ as $r>1$. Therefore, the only generators of $I_1$ as well as $\big<I_1,x_m\big>$, which are divisible by $y_m$, are $x_{m-1}y_m,\ldots,x_{m-r}y_{m}$ as $\mathcal{N}_{\ov{G_1}(m)}=\{m+1,m-1,\ldots,m-r\}$. Thus, $\{x_{m-1},\ldots,x_{m-r}\}\subseteq I^{''}$, where $I^{''}$ is a square-free monomial ideal which does not contain any generator divisible by $x_m$. Also, observe that $(\big<I_1,x_m\big>:y_m)=\big<(I_1:y_m),x_m\big>$ as no generator of $I_1$ is divisible by $x_my_m$. In a similar way, we can conclude the following.)
\item $\big<I_{1},x_{m},y_{m}\big>=J_{1}+\big<x_{m},y_{m}\big>$.\par 
\noindent (It is clear that $\big<I_1,x_m,y_m\big>=\mathrm{in}_{<}(J_{\ov{G_1}\setminus\{m\}})+\big<x_m,y_m\big>$. Now, $\ov{G_1}\setminus \{m\}$ has two connected components $G_1\setminus\{m\}$ and the vertex $m+1$. Thus, $G(\mathrm{in}_{<}(J_{\ov{G_1}\setminus\{m\}}))=G(\mathrm{in}_{<}(J_{G_1\setminus\{m\}}))=G(J_1)$, which gives the equality (5).)

\item $\big<I_{2},x_{m}\big>=\big<I_{2}^{'},x_{m}\big>$, where $I_{2}^{'}$ is an ideal of $A_{2}[x_{m-1},y_{m}]$ such that $G(I_{2}^{'})=G(I_{2})\setminus \{g\in G(I_{2})\mid x_{m}\mid g\}$.
\item $(I_{2}:x_{m})= \big<x_{m-1}y_{m}\big> + I_{2}^{''}$, where $I_{2}^{''}$ is an ideal of $A_{2}$ containing $\{y_{m+1},\ldots,y_{m+s}\}$.
\item $(\big<I_{2},x_{m}\big>:y_{m})=\mathrm{in}_{<}(J_{(G_{2})_{m}\setminus\{m\}}) + \big<x_{m-1},x_{m}\big>$.
\item $\big<I_{2},x_{m},y_{m}\big>=J_{2}+\big<x_{m},y_{m}\big>$.
\item $(I:y_m)=I_{1}^{''}+\mathrm{in}_{<}(J_{(G_2)_{m}})$.
\item $(I:x_m)=I_{2}^{''}+\mathrm{in}_{<}(J_{(G_1)_{m}})$.\par
\noindent (Let $\pi: i=i_0,i_1\ldots,i_r=j$ be an admissible path in $G$ such that $m\in \{i_1,\ldots,i_{r-1}\}$. Then either $\pi$ is an admissible path in $G_1$ or in $G_2$. Moreover, at least two neighbours of $m$ 
will belong to $\{i_0,\ldots,i_r\}$. Thus, any generator of $I$ divisible by $x_m$ and having 
degree greater than $2$ will be divisible by one of $\{x_{m-1},\ldots,x_{m-r}\}$. Hence, the equality (10) follows and similarly, (11) follows.)
\end{enumerate}
}
\end{setup}

\begin{lemma}\label{lemG-vunm}
Let $G=G_{1}\cup G_{2}$ be a connected graph such that $V(G_{1})\cap V(G_{2})=\{v\}$ and $J_{G}$ be unmixed. If $v$ is not a free vertex of $G_{1}$ and $G_{2}$ both, then $J_{G\setminus\{v\}}$ is unmixed. 
\end{lemma}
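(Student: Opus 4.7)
The plan is to prove that for every $T' \in \mathscr{C}(G\setminus\{v\})$ the enlarged set $T^* := T' \cup \{v\}$ lies in $\mathscr{C}(G)$. Granted this, unmixedness of $J_G$ (with $G$ connected) yields $c_G(T^*) = |T^*| + 1 = |T'| + 2$, and since $G \setminus T^* = (G\setminus\{v\}) \setminus T'$ we obtain $c_{G\setminus\{v\}}(T') = |T'| + 2$. The graph $G\setminus\{v\}$ has exactly two connected components: $v$ is a cut vertex of $G$ because $V(G_1) \cap V(G_2) = \{v\}$ while each $V(G_i) \setminus \{v\}$ is non-empty (by non-freeness of $v$, hence $|\mathcal{N}_{G_i}(v)| \geq 2$), so $\{v\} \in \mathscr{C}(G)$ and unmixedness forces $c_G(\{v\}) = 2$, making both $G_i \setminus \{v\}$ connected. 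Thus $c_{G\setminus\{v\}}(T') = |T'|+2$ matches the unmixedness condition for $J_{G\setminus\{v\}}$.

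To verify $T^* \in \mathscr{C}(G)$, the cut-vertex condition at each $t \in T'$ is inherited from $T' \in \mathscr{C}(G\setminus\{v\})$ via the identity $G\setminus(T^*\setminus\{t\}) = (G\setminus\{v\})\setminus(T'\setminus\{t\})$. The only non-trivial condition is that $v$ itself is a cut vertex of $G \setminus T'$. Since each connected component of $(G\setminus\{v\})\setminus T'$ is entirely inside $G_1\setminus\{v\}$ or $G_2\setminus\{v\}$, this holds precisely when $\mathcal{N}_{G_i}(v) \not\subseteq T'$ for both $i$.

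It therefore remains to rule out $\mathcal{N}_{G_1}(v) \subseteq T'$ (the other containment is symmetric). Suppose it holds. Using $v$ non-free in $G_1$, pick $a,b\in \mathcal{N}_{G_1}(v)$ with $\{a,b\} \notin E(G_1)$; both lie in $T'$. Form $\tilde T := (T'\setminus\{a\}) \cup \{v\}$ and verify $\tilde T \in \mathscr{C}(G)$: the cut-vertex check at $v$ holds because $v$'s neighbors in $G \setminus (T' \setminus \{a\})$ include $a$ on the $G_1$-side and (by the symmetric sub-argument using non-freeness of $v$ in $G_2$) some vertex on the $G_2$-side, spanning two distinct components; the cut-vertex check at each $t \in T' \setminus \{a\}$ is inherited from $T' \in \mathscr{C}(G\setminus\{v\})$, with the non-adjacency of $a,b$ ensuring that restoring $a$ does not destroy the cut-vertex property of $t$. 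By unmixedness $c_G(\tilde T) = |\tilde T|+1 = |T'|+1$, hence $c_{G\setminus\{v\}}(T'\setminus\{a\}) = |T'|+1$; but $a \in T' \in \mathscr{C}(G\setminus\{v\})$ gives $c_{G\setminus\{v\}}(T') > c_{G\setminus\{v\}}(T'\setminus\{a\}) = |T'| + 1$, so $c_{G\setminus\{v\}}(T') \geq |T'|+2$. A parallel cut-vertex check shows $T' \in \mathscr{C}(G)$ in the ``$v$ not a cut vertex of $G \setminus T'$'' scenario (the analysis is easier here because $v$ interacts with $G\setminus T'$ in a very constrained way), yielding $c_G(T') = |T'|+1$ by unmixedness. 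Since under the assumption the restored $v$ joins at most one component of $(G\setminus\{v\})\setminus T'$, one has $c_{G\setminus\{v\}}(T') \leq c_G(T') = |T'|+1$, contradicting the bound just obtained.

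The hardest part of the plan is the detailed cut-vertex verification for $\tilde T \in \mathscr{C}(G)$: specifically, for $t \in T' \setminus \{a\}$ lying on the $G_1$-side, the restoration of $a$ could a priori reconnect components that $t$ would otherwise separate. The non-adjacency of $a$ and $b$, afforded precisely by non-freeness of $v$ in $G_1$, is the structural ingredient that permits a clean choice of the removed vertex $a$ preventing such problematic reconnections.
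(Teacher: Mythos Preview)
Your overall strategy—show that $T^*:=T'\cup\{v\}\in\mathscr{C}(G)$ for every $T'\in\mathscr{C}(G\setminus\{v\})$—would indeed yield the result, and the obstruction you isolate (one of the neighborhoods $\mathcal{N}_{G_i}(v)$ sitting inside $T'$) is exactly the right one. However, the argument you propose to rule out $\mathcal{N}_{G_1}(v)\subseteq T'$ has a genuine gap. The assertion that non-adjacency of $a,b\in\mathcal{N}_{G_1}(v)$ ``permits a clean choice of the removed vertex $a$ preventing such problematic reconnections'' does not hold: restoring $a$ can reconnect components that some $t\in T'\setminus\{a\}$ was separating, regardless of whether $a$ and $b$ are adjacent. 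The non-adjacency of $a,b$ is a fact about the local structure at $v$, whereas the cut-vertex property of $t$ concerns components possibly far from $v$; there is no mechanism linking the two. Your invocation of the ``symmetric sub-argument'' to guarantee a $G_2$-side neighbor of $v$ outside $T'$ is also circular as written, since that sub-argument would itself need a $G_1$-side neighbor outside $T'$. In short, you are trying to exploit non-freeness of $v$ in $G_1$ to handle a bad cutset on the $G_1$-side, and that is the wrong hypothesis for the job.

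The paper's proof is different and avoids all of these difficulties. It first quotes \cite[Proposition~5.2]{acc}, which says that $J_{G\setminus\{v\}}$ fails to be unmixed precisely when some $T_1\in\mathscr{C}(H_1)$ contains $\mathcal{N}_{H_1}(v)$ (or symmetrically for $H_2$). Assuming such a $T_1$ exists, the paper then uses non-freeness of $v$ in $G_2$---the \emph{opposite} side---to produce $T_2\in\mathscr{C}(G_2)$ with $v\in T_2$. The disjoint union $T_1\sqcup T_2$ is shown to lie in $\mathscr{C}(G)$ (via \cite[Proposition~3.1]{whisker}), and a direct component count gives $c_G(T_1\sqcup T_2)=|T_1|+|T_2|$, contradicting unmixedness of $J_G$. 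The key idea you are missing is that the non-freeness hypothesis on the side \emph{opposite} to the offending cutset is what furnishes the second cutset needed to violate unmixedness; no delicate restoration argument is required.
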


\begin{proof}
Since $J_{G}$ is unmixed, $G\setminus\{v\}$ has two connected components $H_{1}$ and $H_{2}$. Suppose $J_{G\setminus\{v\}}$ is not unmixed. Then by \cite[Proposition 5.2]{acc}, there exists either a cutset of $H_{1}$ containing $\mathcal{N}_{H_{1}}(v)$ or a cutset of $H_{2}$ containing $\mathcal{N}_{H_{2}}(v)$. Without loss of generality, we assume there exists $T_{1}\in\mathscr{C}(H_{1})$ such that $\mathcal{N}_{H_{1}}(v)\subseteq T_{1}$. Note that $\mathcal{N}_{H_{1}}(v)\subseteq T_{1}\in \mathscr{C}(H_{1})$ implies $T_{1}\in \mathscr{C}(G)$ and hence, $c_{G}(T_{1})=\vert T_{1}\vert +1$. One of the connected components of $G\setminus T_{1}$ is $G_{2}$. Since, $v$ is not a free vertex of $G_{2}$, there exists $T_{2}\in\mathscr{C}(G_{2})$ such that $v\in T_{2}$ by \cite{raufrin}. Then by \cite[Proposition 3.1]{whisker}, $T_{2}\in \mathscr{C}(G)$ and $J_{G}$ being unmixed $c_{G}(T_{2})=\vert T_{2}\vert+1$. One of the connected components of $G\setminus T_{2}$ is $H_{2}$. Again, by \cite[Proposition 3.1]{whisker}, we have $T=T_{1}\sqcup T_{2}\in \mathscr{C}(G)$. It is easy to observe that $c_{G}(T)=\vert T_{1}\vert +\vert T_{2}\vert $, which is a contradiction to the fact that $J_{G}$ is unmixed. Therefore, our assumption was wrong, and $J_{G\setminus\{v\}}$ is unmixed.
\end{proof}

\begin{lemma}\label{lembothnf}
Let $G=G_{1}\cup G_{2}$ be a connected graph such that $V(G_{1})\cap V(G_{2})=\{v\}$ and $J_{G}$ be Cohen-Macaulay. If $v$ is not a free vertex of $G_{1}$ and $G_{2}$ both, then $J_{G\setminus\{v\}}$ is Cohen-Macaulay.
\end{lemma}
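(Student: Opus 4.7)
The plan is to pass to the initial ideal $I = \mathrm{in}_<(J_G)$ via Remark~\ref{remcmin} and exploit Lemma~\ref{propcmcolon} together with Setup~\ref{setup}-style formulas, in the spirit of Proposition~\ref{propGv}. By Remark~\ref{remcmin}, $I$ is Cohen--Macaulay with $\mathrm{depth}(\mathcal{S}/I)=n+1$. An admissible-path analysis as in items (1), (5), (9) of Setup~\ref{setup}, applied directly to the decomposition $G=G_1\cup G_2$, shows that no admissible path in $G$ has its two endpoints straddling $v$; consequently $\langle I,x_v,y_v\rangle=\mathrm{in}_<(J_{G\setminus\{v\}})+\langle x_v,y_v\rangle$, so $\mathcal{S}/\langle I,x_v,y_v\rangle\cong \mathcal{S}_{G\setminus\{v\}}/\mathrm{in}_<(J_{G\setminus\{v\}})$. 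By Lemma~\ref{lemG-vunm} the ideal $J_{G\setminus\{v\}}$ is unmixed, giving $\dim(\mathcal{S}/\langle I,x_v,y_v\rangle)=n+1$, and by Remark~\ref{remcmin} the task reduces to showing $\mathrm{depth}(\mathcal{S}/\langle I,x_v,y_v\rangle)=n+1$.

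Next, by Lemma~\ref{propcmcolon}, both $(I:x_v)$ and $(I:y_v)$ are Cohen--Macaulay of depth $n+1$. Analogues of Setup~\ref{setup}(10),(11) give decompositions $(I:x_v)=I^{\sharp}_2+\mathrm{in}_<(J_{(G_1)_v})$ and $(I:y_v)=I^{\sharp}_1+\mathrm{in}_<(J_{(G_2)_v})$ over pairwise disjoint variable sets. By Remark~\ref{remdepth} both depth and dimension split additively; since depth does not exceed dimension on either summand and the sums match $n+1$, each summand is Cohen--Macaulay. In particular $\mathrm{in}_<(J_{(G_1)_v})$ and $\mathrm{in}_<(J_{(G_2)_v})$, and hence $J_{(G_i)_v}$ by Remark~\ref{remcmin}, are Cohen--Macaulay.

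To conclude, chain the short exact sequences
\begin{align*}
0 &\to \mathcal{S}/(I:x_v)[-1]\xrightarrow{x_v}\mathcal{S}/I\to\mathcal{S}/\langle I,x_v\rangle\to 0,\\
0 &\to \mathcal{S}/((I:y_v)+\langle x_v\rangle)[-1]\xrightarrow{y_v}\mathcal{S}/\langle I,x_v\rangle\to\mathcal{S}/\langle I,x_v,y_v\rangle\to 0,
\end{align*}
where $(\langle I,x_v\rangle:y_v)=(I:y_v)+\langle x_v\rangle$ because no generator $u_\pi x_iy_j$ of $I$ is divisible by $x_vy_v$ (by inspection of the shape of $u_\pi$). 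Using the disjoint-variable splitting of the previous step together with the Cohen--Macaulayness of each factor, $\mathrm{depth}(\mathcal{S}/((I:y_v)+\langle x_v\rangle))=n+1$. Then Lemma~\ref{lemheuneke} applied to $I$ with $\Delta=\{x_v,y_v\}$, combined with these depth equalities to rule out the intermediate colon-depth cases, forces $\mathrm{depth}(\mathcal{S}/\langle I,x_v,y_v\rangle)=n+1$.

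The main obstacle is the final depth inequality: the naive depth-lemma bound from the two SESs alone is only $n$, so upgrading to $n+1$ requires the combination of Setup~\ref{setup}-style colon computations with Lemma~\ref{lemheuneke}. The non-freeness of $v$ in both $G_1$ and $G_2$ enters implicitly through the shape of $\mathcal{N}_G(v)$ and hence the precise form of these decompositions; it is this hypothesis that simultaneously ensures the dimension equality $\dim(\mathcal{S}/\langle I,x_v,y_v\rangle)=n+1$ (via Lemma~\ref{lemG-vunm}) and the rigidity needed in the colon computations.
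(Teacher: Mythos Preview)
Your argument breaks at the final invocation of Lemma~\ref{lemheuneke}. That lemma asserts that $\mathrm{depth}(\mathcal{S}/I)$ equals \emph{one of} the listed quantities; it does not assert that all of them are equal. You have shown $\mathrm{depth}(\mathcal{S}/I)=n+1$ and that the colon alternatives $\mathrm{depth}(\mathcal{S}/(I:x_v))$ and $\mathrm{depth}(\mathcal{S}/(\langle I,x_v\rangle:y_v))$ are also $n+1$. This does not ``rule out'' those cases; on the contrary, it means the disjunction in the lemma is already satisfied by them, and nothing whatsoever follows about $\mathrm{depth}(\mathcal{S}/\langle I,x_v,y_v\rangle)$. (In the paper the lemma is used in the opposite direction: one computes all of the candidate depths, including the one for $\langle I,x_v,y_v\rangle$, and then concludes the depth of $\mathcal{S}/I$.) The two short exact sequences you wrote down give only $\mathrm{depth}(\mathcal{S}/\langle I,x_v,y_v\rangle)\ge n$, and you have no mechanism to upgrade this.

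A second, related symptom: apart from invoking Lemma~\ref{lemG-vunm}, your argument never uses that $v$ is non-free in \emph{both} $G_1$ and $G_2$. If your depth argument were valid it would prove that $J_G$ Cohen--Macaulay and $J_{G\setminus\{v\}}$ unmixed imply $J_{G\setminus\{v\}}$ Cohen--Macaulay for any cut vertex $v$, i.e.\ it would prove Hypothesis~\ref{hypo}, which the paper explicitly leaves open. The paper's proof uses non-freeness in an essential and much more direct way: from $v$ non-free in $G_2$ one picks non-adjacent neighbours $m+i,m+j$ of $v=m$, so that both $x_my_{m+i}$ and $y_mx_{m+i}y_{m+j}$ lie in $I$; then $(I:x_{m+i}y_{m+j})$ already contains $x_m$ and $y_m$ and splits as $J_1+\langle x_m,y_m\rangle+J_2'$ over disjoint variable sets, whence $A_1/J_1$ is Cohen--Macaulay by Lemma~\ref{propcmcolon}. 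The symmetric argument (using non-freeness in $G_1$) handles $J_2$. This is the missing idea.
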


\begin{proof}
We consider the labelling of $G$ as described in Setup \ref{setup} and in this setup, $v=m$. By the given condition, $m$ is not a free vertex of $G_2$ both. Then there exists $m+i,m+j\in \mathcal{N}_{G_2}(m)$ with $i<j$ such that $\{m+i,m+j\}\not\in E(G_2)$. Thus, $m+i,m,m+j$ is an admissible path in $G$ and so, $y_mx_{m+i}y_{m+j}\in G(I)$. Also, $\{m,m+i\}\in E(G)$ implies $x_my_{m+i}\in G(I)$. Now, consider the ideal $(I:x_{m+i}y_{m+j})$ and note that $(I:x_{m+i}y_{m+j})=J_1+\big<x_m,y_m\big>+J_{2}^{'}$, where $J_{2}^{'}$ is an ideal of $A_2$. Since $\mathcal{S}/I$ is Cohen-Macaulay, $\mathcal{S}/(I:x_{m+i}y_{m+j})$ is Cohen-Macaulay by Lemma \ref{propcmcolon}. Hence, $A_1/J_1$ is Cohen-Macaulay. Since $m$ is not a free vertex of $G_1$, in a similar way, it follows that $A_2/J_2$ is Cohen-Macaulay. Since $J_1+J_2=\mathrm{in}_{<}(J_{G\setminus \{v\}})$, by Remark \ref{remcmin}, $J_{G\setminus \{v\}}$ is Cohen-Macaulay.
\end{proof}

\begin{lemma}\label{lemonenf}
Let $G=G_{1}\cup G_{2}$ be a connected graph such that $V(G_{1})\cap V(G_{2})=\{v\}$ and $J_{G}$ be Cohen-Macaulay. Consider the labelling of $G$ as described in Setup \ref{setup} with $v=m$. If $m$ is a non-free vertex of $G_{1}$ and a free vertex of $G_{2}$, then $I_{1}^{'}$ and $J_{G_{2}\setminus\{m\}}$ is Cohen-Macaulay.
\end{lemma}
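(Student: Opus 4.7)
The plan is to follow the template of Lemma \ref{lembothnf}: for each desired Cohen-Macaulay conclusion, I would exhibit it as a tensor factor in a decomposition of some $\mathcal{S}/(I:f)$ along disjoint sets of variables. Since $\mathcal{S}/I$ is Cohen-Macaulay (Remark \ref{remcmin}), Lemma \ref{propcmcolon} gives $\mathcal{S}/(I:f)$ Cohen-Macaulay for every monomial $f$. The asymmetry of the hypothesis forces two different choices of $f$: a divisor arising from $G_1$ (for the $J_{G_2\setminus\{m\}}$ conclusion) and a divisor arising from $G_2$ (for the $I_1'$ conclusion).

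\textbf{Step 1: $J_{G_2\setminus\{m\}}$ is Cohen-Macaulay.} Since $m$ is non-free in $G_1$, pick non-adjacent $p,q\in\mathcal{N}_{G_1}(m)$ with $p<q$. The admissible path $p,m,q$ gives $x_m x_p y_q\in G(I)$, and the edge $\{p,m\}$ gives $x_p y_m\in G(I)$, so $y_m\cdot x_py_q=y_q(x_py_m)\in I$. Hence $\langle x_m,y_m\rangle\subseteq(I:x_py_q)$. Using Setup (1), the reductions $\mathrm{in}_<(J_{G_i})+\langle x_m,y_m\rangle=J_i+\langle x_m,y_m\rangle$ for $i=1,2$, and the elementary identity $(J+\langle x_m,y_m\rangle:f)=(J:f)+\langle x_m,y_m\rangle$ whenever $f$ avoids $x_m,y_m$, I would conclude
\[(I:x_py_q)=(J_1:x_py_q)+J_2+\langle x_m,y_m\rangle.\]
After modding out $\langle x_m,y_m\rangle$, the ideals $(J_1:x_py_q)\subseteq A_1$ and $J_2\subseteq A_2$ live in disjoint sets of variables, so Remark \ref{remdepth} gives $\mathrm{depth}(\mathcal{S}/(I:x_py_q))=\mathrm{depth}(A_1/(J_1:x_py_q))+\mathrm{depth}(A_2/J_2)$, with the analogous additivity of dimension. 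Cohen-Macaulayness of the left side forces each summand to be Cohen-Macaulay, so $A_2/J_2$ is Cohen-Macaulay and $J_{G_2\setminus\{m\}}$ is Cohen-Macaulay by Remark \ref{remcmin}.

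\textbf{Step 2: $I_1'$ is Cohen-Macaulay.} Here I would use the vertex $m+1\in\mathcal{N}_{G_2}(m)$. The edge $\{m,m+1\}$ gives $x_m y_{m+1}\in G(I)$, so $x_m\in(I:y_{m+1})$. Since $y_{m+1}$ is not a variable of $\mathrm{in}_<(J_{G_1})$, $(\mathrm{in}_<(J_{G_1}):y_{m+1})=\mathrm{in}_<(J_{G_1})$. Combining this with Setup (2), namely $\mathrm{in}_<(J_{G_1})+\langle x_m\rangle=I_1'+\langle x_m\rangle$, and with the identity $\mathrm{in}_<(J_{G_2})+\langle x_m\rangle=J_2+\langle x_m\rangle$, I would deduce
\[(I:y_{m+1})=I_1'+(J_2:y_{m+1})+\langle x_m\rangle.\]
The ideals $I_1'\subseteq A_1[y_m]$ and $(J_2:y_{m+1})\subseteq A_2$ involve disjoint variables, so after modding out $\langle x_m\rangle$, Remark \ref{remdepth} gives $\mathrm{depth}(\mathcal{S}/(I:y_{m+1}))=\mathrm{depth}(A_1[y_m]/I_1')+\mathrm{depth}(A_2/(J_2:y_{m+1}))$ with additive dimension. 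Cohen-Macaulayness of the left side via Lemma \ref{propcmcolon} then forces $A_1[y_m]/I_1'$ to be Cohen-Macaulay, i.e., $I_1'$ is Cohen-Macaulay.

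\textbf{Main obstacle.} The crux is to justify the identity $\mathrm{in}_<(J_{G_2})+\langle x_m\rangle=J_2+\langle x_m\rangle$ (and its $\langle x_m,y_m\rangle$ analogue used in Step 1). This requires checking both that no admissible path of $G_2$ uses $m$ as an interior vertex, and that no generator of $\mathrm{in}_<(J_{G_2})$ involves $y_m$. Both facts follow from the freeness of $m$ in $G_2$: $\mathcal{N}_{G_2}(m)$ is a clique, so any path with $m$ as interior vertex would close an induced triangle and violate the admissibility condition; and $m$ being the minimum of $V(G_2)$ prevents $m$ from being the larger endpoint of any admissible path of $G_2$ or from appearing in the $y$-part of any $u_\pi$. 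Once these collapses are in place, the remaining arguments are the standard bookkeeping with monomial colons together with additivity of depth and dimension across disjoint variable sets.
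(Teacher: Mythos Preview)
Your proof is correct and follows essentially the same approach as the paper: both arguments compute the two colons $(I:x_py_q)$ (with $p,q$ non-adjacent neighbours of $m$ in $G_1$) and $(I:y_{m+1})$, then read off $A_2/J_2$ and $A_1[y_m]/I_1'$ as tensor factors of the resulting Cohen-Macaulay rings. Your write-up is in fact more explicit than the paper's, which simply asserts decompositions of the form $(I:x_{m-i}y_{m-j})=J_1'+\langle x_m,y_m\rangle+J_2$ and $(I:y_{m+1})=I_1'+\langle x_m\rangle+J_2'$ with unspecified $J_1'\subseteq A_1$ and $J_2'\subseteq A_2$; you identify these as $(J_1:x_py_q)$ and $(J_2:y_{m+1})$ and supply the justification (your ``Main obstacle'') for why no $y_m$ survives on the $G_2$ side, which the paper leaves implicit.
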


\begin{proof}
 Since $m$ is not a free vertex of $G_{1}$, there exists $m-i,m-j\in \mathcal{N}_{G_1}(m)$ with $i>j$ such that $\{m-i,m-j\}\not\in E(G_1)$. Thus, $m-i,m,m-j$ is an admissible path in $G$ and so, $x_mx_{m-i}y_{m-j}\in G(I)$. Also, $\{m-i,m\}\in E(G)$ implies $x_{m-i}y_{m}\in G(I)$. Now, consider the ideal $(I:x_{m-i}y_{m-j})$ and note that $(I:x_{m-i}y_{m-j})=J_{1}^{'}+\big<x_m,y_m\big>+J_{2}$, where $J_{1}^{'}$ is an ideal of $A_1$. Since $\mathcal{S}/I$ is Cohen-Macaulay, $\mathcal{S}/(I:x_{m+i}y_{m+j})$ is Cohen-Macaulay by Lemma \ref{propcmcolon}. Hence, $A_2/J_2$ is Cohen-Macaulay and this gives $J_{G_{2}\setminus\{m\}}$ is Cohen-Macaulay by Remark \ref{remcmin}. Again, consider the ideal $(I:y_{m+1})$. Note that $\big<I_1,x_m\big>=\big<\mathrm{in}_{<}(J_{G_1}),x_my_{m+1},x_m\big>=\big<I_{1}^{'},x_m\big>$. Thus, we can write $(I:y_{m+1})=I_{1}^{'}+\big<x_m\big>+J_{2}^{'}$, where $J_{2}^{'}$ is an ideal of $A_2$. Now, $\mathcal{S}/I$ is Cohen-Macaulay implies $\mathcal{S}/(I:y_{m+1})$ is Cohen-Macaulay by Lemma \ref{propcmcolon} Thus, $A_{1}[y_m]/I_{1}^{'}$ is Cohen-Macaulay. 
\end{proof}

\begin{proposition}\label{lemGiv}
Let $G=G_{1}\cup G_{2}$ be a connected graph such that $V(G_{1})\cap V(G_{2})=\{v\}$ and $J_{G}$ be Cohen-Macaulay. Then $J_{(G_1)_{v}}$ and $J_{(G_2)_{v}}$ are Cohen-Macaulay.
\end{proposition}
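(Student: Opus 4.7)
The plan is to deduce Cohen-Macaulayness of $J_{(G_1)_v}$ and $J_{(G_2)_v}$ by looking at the colon ideals $(I:x_m)$ and $(I:y_m)$, where $I=\mathrm{in}_{<}(J_G)$ and $v=m$ as in Setup \ref{setup}. The point is that these two colon ideals have, thanks to items (11) and (10) of Setup \ref{setup}, a very clean decomposition as a sum of two monomial ideals supported on disjoint sets of variables, one of which is exactly the initial ideal of a binomial edge ideal of the sort we want.

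First I would relabel the vertices as in Setup \ref{setup}, so that $v=m$, and note that by Remark \ref{remcmin} the hypothesis that $J_G$ is Cohen-Macaulay is equivalent to $\mathcal{S}/I$ being Cohen-Macaulay. Apply Lemma \ref{propcmcolon} with $f=x_m$ to conclude that $\mathcal{S}/(I:x_m)$ is Cohen-Macaulay. Now invoke Setup \ref{setup}(11), which tells us
\[
(I:x_m) \;=\; I_2'' \;+\; \mathrm{in}_{<}(J_{(G_1)_m}),
\]
where $I_2'' \subseteq A_2 = K[x_j,y_j \mid j \in V(G_2)\setminus\{m\}]$ and $\mathrm{in}_{<}(J_{(G_1)_m}) \subseteq \mathcal{S}_1 := K[x_j,y_j \mid j \in V(G_1)]$. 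Since $V(G_1)\cap(V(G_2)\setminus\{m\})=\emptyset$, the two summands involve disjoint sets of variables.

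Next I would use the standard fact (which follows from Remark \ref{remdepth} together with the corresponding additivity of Krull dimension) that a tensor product $R_1 \otimes_K R_2$ of two finitely generated graded $K$-algebras is Cohen-Macaulay if and only if both $R_1$ and $R_2$ are. Applying this to the two disjoint-variable pieces of $(I:x_m)$ forces $\mathcal{S}_1/\mathrm{in}_{<}(J_{(G_1)_m})$ to be Cohen-Macaulay, and then Remark \ref{remcmin} upgrades this to the conclusion that $J_{(G_1)_m}$ is Cohen-Macaulay. A symmetric argument using $f=y_m$ and Setup \ref{setup}(10), which gives $(I:y_m) = I_1'' + \mathrm{in}_{<}(J_{(G_2)_m})$ with $I_1''\subseteq A_1$ and the second summand in $\mathcal{S}_2 := K[x_j,y_j \mid j \in V(G_2)]$, yields that $J_{(G_2)_m}$ is Cohen-Macaulay.

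The main obstacle, which is really only a bookkeeping obstacle, is making sure the disjoint-variable split is clean: one needs to confirm that $I_2''$ uses no variable indexed by $V(G_1)$ and that $\mathrm{in}_{<}(J_{(G_1)_m})$ uses no variable indexed by $V(G_2)\setminus\{m\}$, both of which are immediate from how these ideals are constructed in Setup \ref{setup}. Once that is in hand, the argument is essentially two applications of Lemma \ref{propcmcolon} followed by the tensor-product criterion for Cohen-Macaulayness.
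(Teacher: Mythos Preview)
Your proposal is correct and follows essentially the same route as the paper: relabel as in Setup \ref{setup}, use Lemma \ref{propcmcolon} on $(I:x_m)$ and $(I:y_m)$, invoke the decompositions (11) and (10) into disjoint-variable pieces, and then pass back through Remark \ref{remcmin}. If anything, you are slightly more explicit than the paper about the tensor-product step that extracts Cohen--Macaulayness of $\mathrm{in}_{<}(J_{(G_i)_m})$ from that of the sum.
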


\begin{proof}
    Consider the labelling of $G$ as described in Setup \ref{setup} with $v=m$. We consider the ideal $I=\mathrm{in}_{<}(J_G)$. Note that the only degree two generators of $I$ divisible by $x_m$ are $x_my_{m+1},\ldots,x_{m}y_{m+s}$. Again, if $u\in G(I)$ is a generator of degree greater than $2$ and divisible by $x_m$, then $u\in G(\mathrm{in}_{<}(J_{G_1}))$. By looking into the admissible paths in $G_1$, it is easy to verify that $(\mathrm{in}_{<}(J_{G_1}):x_{m})=\mathrm{in}_{<}(J_{(G_{1})_{m}})$. Therefore, we can write $(I:x_m)=\mathrm{in}_{<}(J_{(G_{1})_{m}})+I_{2}^{''}$. Now, $J_G$ s Cohen-Macaulay implies $\mathcal{S}/I$ is Cohen-Macaulay by Remark \ref{remcmin}. Thus, by Lemma \ref{propcmcolon}, $\mathcal{S}/(I:x_m)$ is Cohen-Macaulay. Hence, $J_{(G_1)_m}$ is Cohen-Macaulay by Remark \ref{remcmin}. Similarly, considering the ideal $(I:y_m)$, we get $J_{(G_2)_m}$ is Cohen-Macaulay.
\end{proof}

\begin{proposition}\label{propGv-v}
    Let $G$ be a graph such that $J_{G}$ is Cohen-Macaulay. If there is a cut vertex $v$ of $G$ for which $J_{G\setminus\{v\}}$ is Cohen-Macaulay, then $J_{G_{v}\setminus \{v\}}$ is Cohen-Macaulay.
\end{proposition}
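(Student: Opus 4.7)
The plan is to exploit the Ohtani-type short exact sequence recalled in the introduction, together with the standard depth lemma. First I would verify that $v$ is a non-free vertex of $G$, so that the sequence applies. Indeed, if $v$ were free, then $\mathcal{N}_G(v)$ would induce a clique, so the neighbors of $v$ would all lie in the same component of $G\setminus\{v\}$; since $G$ is connected, a shortest path from any other vertex $u$ to $v$ ends at a neighbor of $v$, placing $u$ in that same component. This contradicts $v$ being a cut vertex. Hence the sequence
$$0\longrightarrow \mathcal{S}/J_{G}\longrightarrow \mathcal{S}/J_{G_{v}}\oplus \mathcal{S}/\langle J_{G\setminus \{v\}},x_{v},y_{v}\rangle \longrightarrow \mathcal{S}/\langle J_{G_{v}\setminus\{v\}}, x_{v},y_{v}\rangle\longrightarrow 0$$
is available. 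Call the three terms $A$, $B$, $C$; we want $C$ Cohen-Macaulay.

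Next I would compute dimensions and depths. Let $n=|V(G)|$ and let $k\geq 2$ be the number of connected components of $G\setminus\{v\}$. Since $G$ is connected, the assumption that $v$ is a cut vertex forces every component of $G\setminus\{v\}$ to contain a neighbor of $v$; since $G_{v}$ adds all edges among neighbors of $v$, the graph $G_{v}\setminus\{v\}$ is connected. Because $J_{G}$ is Cohen-Macaulay, $\mathrm{depth}(A)=\dim(A)=n+1$. By Proposition \ref{propGv}, $J_{G_{v}}$ is Cohen-Macaulay, so $\mathrm{depth}(\mathcal{S}/J_{G_v})=n+1$. Using $\mathcal{S}/\langle J_{G\setminus\{v\}},x_v,y_v\rangle \cong \mathcal{S}'/J_{G\setminus\{v\}}$ (where $\mathcal{S}'$ drops $x_v,y_v$) and the hypothesis that $J_{G\setminus\{v\}}$ is Cohen-Macaulay, its depth equals $(n-1)+k\geq n+1$. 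Hence $\mathrm{depth}(B)=\min\{n+1,(n-1)+k\}=n+1$. Finally $\dim(C)=(n-1)+1=n$, since $G_{v}\setminus\{v\}$ is connected.

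Now I would apply the depth lemma to the short exact sequence $0\to A\to B\to C\to 0$, which gives
$$\mathrm{depth}(C)\geq \min\{\mathrm{depth}(A)-1,\mathrm{depth}(B)\}=\min\{n,n+1\}=n.$$
Combined with $\mathrm{depth}(C)\leq \dim(C)=n$, we conclude $\mathrm{depth}(C)=\dim(C)=n$, so $C$ is Cohen-Macaulay. Modding out the regular sequence $x_v,y_v$ does not affect the Cohen-Macaulay property, so $J_{G_{v}\setminus\{v\}}$ is Cohen-Macaulay, as desired.

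The only potentially delicate point is the connectedness of $G_{v}\setminus\{v\}$ (which forces $\dim(C)=n$ rather than something larger) and the tight inequality $(n-1)+k\geq n+1$ when $k\geq 2$; if either of these failed, the $\mathrm{depth}(B)\geq n+1$ bound would be lost and the depth lemma would give only $\mathrm{depth}(C)\geq n-1$, too weak to conclude. Both hold precisely because $v$ is a cut vertex with every component of $G\setminus\{v\}$ touching $v$, so this is where the cut-vertex hypothesis pays off.
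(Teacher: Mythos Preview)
Your overall strategy matches the paper's: run the Ohtani exact sequence, use Proposition~\ref{propGv} to handle the $G_v$ term, and apply the depth lemma. However, there is a genuine gap in one step.

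You assert that $\dim(C)=(n-1)+1=n$ ``since $G_{v}\setminus\{v\}$ is connected.'' Connectedness alone does not give this. For a connected graph $H$ on $m$ vertices one has $\dim(\mathcal{S}_H/J_H)=m+\max_{T\in\mathscr{C}(H)}\{c_H(T)-|T|\}$, and this maximum can exceed $1$; for instance, for the star $K_{1,3}$ the cutset consisting of the center gives $c(T)-|T|=3-1=2$, so $\dim=6>5=m+1$. Thus connectedness yields only $\dim(C)\ge n$, and with merely $\mathrm{depth}(C)\ge n$ you cannot conclude Cohen--Macaulayness if $\dim(C)>n$. The paper closes this gap by first proving that $J_{G_v\setminus\{v\}}$ is \emph{unmixed}: since $J_G$ is Cohen--Macaulay (hence $G$ is accessible and $J_{G\setminus\{v\}}$ is unmixed by hypothesis), \cite[Corollary~5.16]{acc} gives unmixedness of $J_{G_v\setminus\{v\}}$, and then the formula $\dim=(n-1)+1=n$ is legitimate. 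You used exactly this ``unmixed $\Rightarrow$ $\dim=n+1$'' implication implicitly for $\mathcal{S}/J_G$ and $\mathcal{S}/J_{G_v}$; you just need to justify it for $G_v\setminus\{v\}$ as well, and that justification is the missing ingredient.

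A minor remark: your allowance $k\ge 2$ for the number of components of $G\setminus\{v\}$ is harmless but unnecessary, since $J_G$ Cohen--Macaulay forces $J_G$ unmixed, and with $\{v\}\in\mathscr{C}(G)$ this gives $c_G(\{v\})=|\{v\}|+1=2$, i.e.\ $k=2$ exactly (as the paper uses).
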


\begin{proof}
    Since $v$ is a non-free vertex of $G$, due to \cite[Lemma 4.8]{ohtani}, we get the following exact sequence:
    $$ 0\longrightarrow \mathcal{S}/J_{G}\longrightarrow \mathcal{S}/J_{G_{v}}\oplus \mathcal{S}/\big<J_{G\setminus \{v\}},x_{v},y_{v}\big> \longrightarrow \mathcal{S}/\big<J_{G_{v}\setminus\{v\}}, x_{v},y_{v}\big>\longrightarrow 0.$$
Without loss of generality, we may assume $G$ is connected. Then $\mathrm{depth}(\mathcal{S}/J_{G})=n+1$. Since $v$ is a cut vertex of $G$ and $J_G$ is unmixed, $G\setminus\{v\}$ has two connected components. Thus, $J_{G\setminus \{v\}}$ is Cohen-Macaulay implies $\mathrm{depth}(\mathcal{S}/\big<J_{G\setminus\{v\}},x_v,y_v\big>)=n-1+2=n+1$. Again, by Proposition \ref{propGv}, $J_{G_v}$ is Cohen-Macaulay and hence, $\mathrm{depth}(\mathcal{S}/J_{G_v})=n+1$. Therefore, using \cite[Proposition 1.2.9]{cmring}, we get $\mathrm{depth}(\mathcal{S}/\big<J_{G_v\setminus\{v\}},x_v,y_v\big>)\geq n$. Now, $J_G$ is Cohen-Macaulay and $J_{G\setminus\{v\}}$ is unmixed together imply $J_{G_v\setminus \{v\}}$ is unmixed by \cite[Corollary 5.16]{acc}. Therefore, $\mathrm{dim}(\mathcal{S}/\big<J_{G_v\setminus\{v\}},x_v,y_v\big>)=n$ as $G_{v}\setminus\{v\}$ is connected and hence, $J_{G_v\setminus\{v\}}$ is Cohen-Macaulay.
\end{proof}

\begin{corollary}\label{corbothnf}
    Let $G=G_{1}\cup G_{2}$ be a connected graph such that $V(G_{1})\cap V(G_{2})=\{v\}$ and $J_{G}$ be Cohen-Macaulay. If $v$ is a non-free vertex of $G_1$ and $G_2$ both, then $J_{G\setminus\{v\}}$, $J_{G_v}$ and $J_{G_v\setminus\{v\}}$ are Cohen-Macaulay. 
\end{corollary}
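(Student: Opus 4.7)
The statement is essentially the conjunction of three Cohen-Macaulayness conclusions, each of which has been prepared by an earlier result in this section. The plan is therefore to assemble Lemma \ref{lembothnf}, Proposition \ref{propGv}, and Proposition \ref{propGv-v} in the right order rather than to do any fresh computation.

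First, I would dispatch the Cohen-Macaulayness of $J_{G_v}$: since $J_G$ is assumed Cohen-Macaulay, Proposition \ref{propGv} (applied with the vertex $v$) gives $J_{G_v}$ Cohen-Macaulay immediately, with no hypothesis on freeness at $v$ required. Next, I would invoke Lemma \ref{lembothnf} directly: its hypothesis -- $G=G_1\cup G_2$ connected with $V(G_1)\cap V(G_2)=\{v\}$, $J_G$ Cohen-Macaulay, and $v$ a non-free vertex of both $G_1$ and $G_2$ -- is exactly what we are given, so it yields that $J_{G\setminus\{v\}}$ is Cohen-Macaulay.

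For the third piece, $J_{G_v\setminus\{v\}}$, I would apply Proposition \ref{propGv-v}. To do so I need $v$ to be a cut vertex of $G$ and $J_{G\setminus\{v\}}$ to be Cohen-Macaulay. The latter was established in the previous step. For the former, note that because $v$ is non-free in $G_1$ (respectively $G_2$), each $G_i$ has at least two vertices other than $v$, so $V(G_i)\setminus\{v\}$ is nonempty for $i=1,2$; hence $G\setminus\{v\}$ has at least two connected components (one contained in each $V(G_i)\setminus\{v\}$), which makes $v$ a cut vertex of the connected graph $G$. Proposition \ref{propGv-v} then yields Cohen-Macaulayness of $J_{G_v\setminus\{v\}}$.

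There is no real obstacle here beyond being careful with the bookkeeping of hypotheses: each invocation requires connectedness of $G$ (given) plus either Cohen-Macaulayness of $J_G$, of $J_{G\setminus\{v\}}$, or the non-freeness condition at $v$, all of which are either assumed or have just been deduced. The only subtle point worth stating explicitly is the verification that $v$ is in fact a cut vertex of $G$, which follows at once from the fact that a non-free vertex must have degree at least one in both $G_1$ and $G_2$, together with the decomposition $V(G_1)\cap V(G_2)=\{v\}$.
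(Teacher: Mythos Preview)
Your proposal is correct and follows exactly the paper's approach: the paper's proof is a one-line citation of Proposition~\ref{propGv}, Lemma~\ref{lembothnf}, and Proposition~\ref{propGv-v}, and you assemble these three results in the same way. Your explicit check that $v$ is a cut vertex is a welcome clarification that the paper leaves implicit.
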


\begin{proof}
    By Proposition \ref{propGv}, Lemma \ref{lembothnf}, and Proposition \ref{propGv-v}, the result follows.
\end{proof}

 Due to Remark \ref{remglu}, Proposition \ref{propGv}, Lemma \ref{lembothnf}, and Proposition \ref{propGv-v}, we remark the following.
 
\begin{remark}\label{thmG-vcm}
Let $G$ be a graph such that $J_{G}$ is Cohen-Macaulay. If there is a cut vertex $v$ of $G$ for which $J_{G\setminus\{v\}}$ is unmixed, then $J_{G\setminus\{v\}}, J_{G_{v}}, J_{G_{v}\setminus \{v\}}$ are Cohen-Macaulay assuming Hypothesis \ref{hypo}. Moreover, if $v\in V(G)$ is a free vertex of $G$, then $J_{G}$ is Cohen-Macaulay and $J_{G\setminus\{v\}}$ is unmixed together imply $J_{G\setminus\{v\}}$ is Cohen-Macaulay.
\end{remark}

\begin{lemma}\label{lemGv-vunm}
    Let $G$ be a graph and $v\in V(G)$ is not a cut vertex of $G$. If $J_{G_v}$ is unmixed and $J_{G\setminus \{v\}}$ is unmixed, then $J_{{G_v}\setminus\{v\}}$ is unmixed.
\end{lemma}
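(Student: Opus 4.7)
The plan is to argue by contradiction. Set $H := G_v \setminus \{v\}$; the goal is to show that $J_H$ is unmixed. First I would use the standing assumptions to reduce to a clean situation: since $G$ is connected and $v$ is not a cut vertex of $G$, the graph $G \setminus \{v\}$ is connected, and since $H$ contains $G \setminus \{v\}$ as a spanning subgraph, $H$ is connected as well. Hence unmixedness of $J_H$ is equivalent to $c_H(T) = |T| + 1$ for every $T \in \mathscr{C}(H)$. Suppose on the contrary that some $T \in \mathscr{C}(H)$ satisfies $c_H(T) \geq |T| + 2$.

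The key structural observation is that $\mathcal{N}_G(v) = \mathcal{N}_{G_v}(v)$ is a clique in $G_v$, and hence in $H$, so the surviving neighbors of $v$, namely $B := \mathcal{N}_G(v) \setminus T$, lie in a single connected component of $H \setminus T$ whenever $B$ is non-empty. I would then promote $T$ to a cutset of $G_v$ and count components there. The component count is straightforward: if $B \neq \emptyset$ then in $G_v \setminus T$ the vertex $v$ is glued to the unique component of $H \setminus T$ containing $B$, giving $c_{G_v}(T) = c_H(T)$; if $B = \emptyset$ then $v$ is isolated in $G_v \setminus T$, giving $c_{G_v}(T) = c_H(T) + 1$. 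In either case $c_{G_v}(T) \geq c_H(T) \geq |T| + 2$.

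To verify $T \in \mathscr{C}(G_v)$, for each $t \in T$ I would check that $t$ is a cut vertex of $G_v \setminus (T \setminus \{t\})$, using that $t$ is already a cut vertex of $H \setminus (T \setminus \{t\})$. A small case analysis on whether $t \in \mathcal{N}_G(v)$ and whether $B$ is empty shows that inserting $v$ contributes a fixed offset ($0$ or $+1$) to the component count of both $G_v \setminus (T \setminus \{t\})$ and $G_v \setminus T$, so the strict inequality $c_H(T) > c_H(T \setminus \{t\})$ guaranteed by the cutset hypothesis transfers to a strict inequality $c_{G_v}(T) > c_{G_v}(T \setminus \{t\})$. Consequently $T \in \mathscr{C}(G_v)$, and combining with the bound of the previous paragraph one gets $c_{G_v}(T) \geq |T| + 2$, contradicting the unmixedness of $J_{G_v}$ (which, as $G_v$ is connected, would force $c_{G_v}(T) = |T| + 1$).

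The main technical obstacle will be the bookkeeping for $T \in \mathscr{C}(G_v)$: one must track the merging effect of the clique on $\mathcal{N}_G(v)$ both with $T$ and with $T \setminus \{t\}$ removed. The most delicate sub-case is $B = \emptyset$ with $t \in \mathcal{N}_G(v)$, where removing $t$ simultaneously isolates $v$ and disconnects the host component of $H \setminus (T \setminus \{t\})$, producing a jump of at least two in the $G_v$-component count; but this is exactly what the clique structure on $\mathcal{N}_G(v)$ allows one to handle uniformly with the other cases, and no further input is needed beyond the unmixedness of $J_{G_v}$.
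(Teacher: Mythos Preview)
Your contradiction hypothesis is set up incorrectly, and this is a genuine gap rather than a cosmetic one. Unmixedness of $J_H$ (with $H=G_v\setminus\{v\}$ connected) means $c_H(T)=|T|+1$ for every $T\in\mathscr{C}(H)$; its failure is therefore the existence of some $T$ with $c_H(T)\neq |T|+1$, not with $c_H(T)\geq |T|+2$. Your own component bookkeeping shows $c_{G_v}(T)\in\{c_H(T),\,c_H(T)+1\}$, and since you correctly prove $T\in\mathscr{C}(G_v)$, unmixedness of $J_{G_v}$ forces $c_{G_v}(T)=|T|+1$, whence $c_H(T)\in\{|T|,\,|T|+1\}$. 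So the case $c_H(T)\geq |T|+2$ that you rule out is in fact already excluded by the hypothesis on $J_{G_v}$; the real obstruction to unmixedness of $J_H$ is the possibility $c_H(T)=|T|$, which arises exactly when $B=\mathcal{N}_G(v)\setminus T=\emptyset$, i.e.\ when $\mathcal{N}_G(v)\subseteq T$. Your argument does not exclude this case, and indeed it cannot be excluded using only the unmixedness of $J_{G_v}$: you never invoke the second hypothesis that $J_{G\setminus\{v\}}$ is unmixed, which is a clear sign that something is missing.

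The paper closes this gap precisely by showing that no cutset of $H$ can contain $\mathcal{N}_G(v)$. If $\mathcal{N}_G(v)\subseteq T\in\mathscr{C}(H)$, then since $H$ and $G\setminus\{v\}$ differ only by edges inside $\mathcal{N}_G(v)\subseteq T$, one has $T\in\mathscr{C}(G\setminus\{v\})$ with $c_{G\setminus\{v\}}(T)=c_H(T)$; unmixedness of $J_{G\setminus\{v\}}$ then gives $c_H(T)=|T|+1$. On the other hand $T\in\mathscr{C}(G_v)$ with $c_{G_v}(T)=c_H(T)+1=|T|+2$, contradicting unmixedness of $J_{G_v}$. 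Once $\mathcal{N}_G(v)\subseteq T$ is ruled out, your $B\neq\emptyset$ analysis (or the cited \cite[Corollary~4.3]{ccm}) finishes the proof. So your framework is salvageable, but as written the argument treats only the vacuous direction and omits the step that actually uses both hypotheses.
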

\begin{proof}
    Without loss of generality, assume $G$ is connected. Let $T\in \mathscr{C}(G_{v}\setminus\{v\})$. Suppose $\mathcal{N}_{G}(v)\subseteq T$. Note that $T$ is a cutset of $G\setminus \{v\}$ also and $c_{G\setminus\{v\}}(T)=c_{G_{v}\setminus\{v\}}(T)$ as $v$ is not a cut vertex of $G$. Since $J_{G\setminus\{v\}}$ is unmixed and $G\setminus\{v\}$ is connected (as $v$ is not a cut vertex of $G$), $c_{G\setminus\{v\}}(T)=c_{G_{v}\setminus\{v\}}(T)=\vert T\vert+1$. Again, it is easy to observe that $T\in\mathscr{C}(G_v)$ and $c_{G_v}(T)=\vert T\vert+2$. This is a contradiction to the fact that $J_{G_v}$ is unmixed. Thus, there is no cutset of $G_{v}\setminus\{v\}$ containing $\mathcal{N}_{G}(v)$. Hence, by \cite[Corollary 4.3]{ccm}, we have $J_{G_v\setminus\{v\}}$ is unmixed.
\end{proof}
 
\begin{theorem}\label{thmGcm}
Let $G=G_{1}\cup G_{2}$ be a graph such that $V(G_{1})\cap V(G_{2})=\{v\}$. Consider the graph $\overline{G_{i}}$ by attaching a whisker to the graph $G_{i}$ at the vertex $v$ for $i=1,2$. If $J_{\ov{G_i}}$ is Cohen-Macaulay for each $i\in\{1,2\}$ and $J_{G}$ is unmixed, then $J_{G}$ is Cohen-Macaulay under the Hypothesis \ref{hypo}.
\end{theorem}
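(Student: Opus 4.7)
Adopt the labelling and notation of Setup \ref{setup}, so $v=m$, and write $I=\mathrm{in}_<(J_G)$. Since $J_G$ is unmixed and $G$ is connected, Lemma \ref{lemunmin} gives $\dim(\mathcal{S}/I)=n+1$, so by Remark \ref{remcmin} it suffices to show $\mathrm{depth}(\mathcal{S}/I)\geq n+1$. My plan is to apply Lemma \ref{lemheuneke} to the square-free monomial ideal $I$ with $\Delta=\{x_m,y_m\}$ (colon by $x_m$ first); the depth of $\mathcal{S}/I$ must then coincide with one of
\[
D_1=\mathrm{depth}(\mathcal{S}/\langle I,x_m,y_m\rangle),\quad D_2=\mathrm{depth}(\mathcal{S}/(I:x_m)),\quad D_3=\mathrm{depth}(\mathcal{S}/(\langle I,x_m\rangle:y_m)),
\]
so I will argue that each $D_j\geq n+1$.

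The necessary Cohen-Macaulay inputs all flow from $\mathcal{S}_i/I_i$ being Cohen-Macaulay via Lemma \ref{propcmcolon}. Item (3) together with Lemma \ref{propcmcolon} shows $\mathcal{S}_1/(I_1:x_m)=\mathcal{S}_1/(\mathrm{in}_<(J_{(G_1)_m})+\langle y_{m+1}\rangle)$ is Cohen-Macaulay; killing the regular variable $y_{m+1}$ (with $x_{m+1}$ acting as a free polynomial variable) and invoking Remark \ref{remcmin} yield $J_{(G_1)_m}$ Cohen-Macaulay, and the symmetric colon-by-$y_m$ calculation (an analog of item (3) for $G_2$, whose smallest vertex is $m$) gives $J_{(G_2)_m}$ Cohen-Macaulay. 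A parallel Lemma \ref{propcmcolon} argument on $\mathcal{S}_1/(I_1:y_m)=\mathcal{S}_1/(I_1''+\langle x_m y_{m+1}\rangle)$, after splitting off the complete intersection $K[x_m,y_m,x_{m+1},y_{m+1}]/\langle x_m y_{m+1}\rangle$, pins down $\mathrm{depth}(K[V(G_1)\setminus\{m\}]/I_1'')=m-1$; the mirror computation on $\mathcal{S}_2/(\langle I_2,x_m\rangle:y_m)$ using item (8) produces $J_{(G_2)_m\setminus\{m\}}$ Cohen-Macaulay.

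With these inputs in hand and Remark \ref{remdepth} to split over disjoint variable blocks, the estimates for $D_2$ and $D_3$ are direct. For $D_2$, item (11) gives $(I:x_m)=I_2''+\mathrm{in}_<(J_{(G_1)_m})$, supported on the disjoint variable sets $V(G_1)$ and $V(G_2)\setminus\{m\}$, so $D_2=(m+1)+(n-m)=n+1$. For $D_3$, combining items (4) and (8) (with the description $\langle I_2,x_m\rangle=\langle I_{G_2},x_m,x_{m-1}y_m\rangle$) yields $(\langle I,x_m\rangle:y_m)=I_1''+\mathrm{in}_<(J_{(G_2)_m\setminus\{m\}})+\langle x_m\rangle$, supported on disjoint variable sets $V(G_1)\setminus\{m\}$ and $V(G_2)\setminus\{m\}$ with $y_m$ free, giving $D_3=(m-1)+(n-m+1)+1=n+1$. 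For $D_1$, items (5) and (9) identify $\langle I,x_m,y_m\rangle=\mathrm{in}_<(J_{G\setminus\{m\}})+\langle x_m,y_m\rangle$, so $D_1=\mathrm{depth}(K[V(G)\setminus\{m\}]/J_{G\setminus\{m\}})$ hits the target $n+1$ once $J_{G\setminus\{m\}}$ is Cohen-Macaulay. This is where Hypothesis \ref{hypo} enters: applying it to $\overline{G_i}$ at its cut vertex $m$---and using that $\overline{G_i}\setminus\{m\}$ is the disjoint union of $G_i\setminus\{m\}$ with the isolated whisker, so $J_{\overline{G_i}\setminus\{m\}}=J_{G_i\setminus\{m\}}$---Hypothesis \ref{hypo} produces $J_{G_i\setminus\{m\}}$ Cohen-Macaulay as soon as it is unmixed.

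The principal obstacle is verifying this unmixedness across all configurations of $m$ relative to $G_1$ and $G_2$. When $m$ is non-free in both $G_i$, Lemma \ref{lemG-vunm} applied to $G$ at $m$ delivers $J_{G\setminus\{m\}}$ unmixed at once. When $m$ is free in both, $G$ is decomposable at $m$ and Remark \ref{remglu}---together with the decomposability of each $\overline{G_i}$ at $m$ and the Cohen-Macaulayness of $J_{K_2}$---gives $J_G$ Cohen-Macaulay directly, bypassing Lemma \ref{lemheuneke} entirely. The remaining mixed case (say $m$ free in $G_2$ but not in $G_1$) is the most delicate: decomposability of $\overline{G_2}$ yields $J_{G_2}$ Cohen-Macaulay, from which unmixedness of $J_{G_2\setminus\{m\}}$ follows by a free-vertex-removal argument, but on the non-free side one must run a cutset argument in the spirit of Lemma \ref{lemG-vunm}, combining the unmixedness of $J_G$ with the freeness of $m$ in $G_2$, to extract the needed unmixedness of $J_{G_1\setminus\{m\}}$. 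Making this mixed sub-case watertight is the main technical difficulty of the proof.
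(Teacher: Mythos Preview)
Your approach coincides with the paper's in the case where $m$ is non-free in both $G_i$: both apply Lemma~\ref{lemheuneke} with $\Delta=\{x_m,y_m\}$ and compute the same three depths. One wrinkle even there: your claim that ``the mirror computation on $\mathcal{S}_2/(\langle I_2,x_m\rangle:y_m)$'' yields $J_{(G_2)_m\setminus\{m\}}$ Cohen--Macaulay is not justified, since $(\langle I_2,x_m\rangle:y_m)$ is not a single colon of $I_2$ and Lemma~\ref{propcmcolon} does not apply to it directly. The paper obtains this piece differently: it shows $J_{(G_2)_m\setminus\{m\}}$ is unmixed via Lemma~\ref{lemGv-vunm} and then invokes Hypothesis~\ref{hypo} a second time, applied to $\ov{(G_2)_m}$.

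The serious gap is the mixed case, and it is fatal to the uniform two-step scheme you propose. With $m$ free in $G_2$ and non-free in $G_1$, your $D_1$ computation requires $J_{G_1\setminus\{m\}}$ Cohen--Macaulay, hence unmixed. But the cutset mechanism behind Lemma~\ref{lemG-vunm} does not transfer to this side: to rule out a cutset $T\in\mathscr{C}(G_1\setminus\{m\})$ with $\mathcal{N}_{G_1}(m)\subseteq T$ one would need to pair $T$ with a cutset of $G_2$ containing $m$, and since $m$ is \emph{free} in $G_2$ no such cutset exists. Indeed, both the unmixedness of $J_G$ and that of $J_{\ov{G_1}}$ impose only the constraint $c_{G_1\setminus\{m\}}(T)=|T|$ on such a $T$, which is perfectly consistent with its existence and is exactly the statement that $J_{G_1\setminus\{m\}}$ fails to be unmixed. (Your remark that unmixedness of $J_{G_2\setminus\{m\}}$ follows ``by a free-vertex-removal argument'' is also off; the paper obtains it by a cutset argument exploiting the non-freeness of $m$ in $G_1$.) The paper circumvents the obstruction by abandoning the two-step scheme in the mixed case and using only $\Delta=\{x_m\}$: then $\langle I,x_m\rangle=I_1'+J_2+\langle x_m\rangle$, where $I_1'\subseteq A_1[y_m]$ replaces $J_1$. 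Cohen--Macaulayness of $I_1'$ comes from Lemma~\ref{lemonenf} applied to the Cohen--Macaulay graph $\ov{G_1}=G_1\cup K_2$, and only $J_{G_2\setminus\{m\}}$ must be shown unmixed---which \emph{does} succeed, precisely because one can exploit a cutset of $G_1$ through $m$.
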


\begin{proof}
If $v$ is a free vertex of both $G_{1}$ and $G_{2}$, then without assuming Hypothesis \ref{hypo}, the result follows by Remark \ref{remglu}. So, we may assume $G$ is not decomposable. Let $G$ be connected with $V(G)=\{1,\ldots,n\}$. We label the vertices of $G$ as described in Setup \ref{setup} and in this case, $v=m$. Since $J_{G}$ is unmixed and $\{m\}\in \mathscr{C}(G)$, $c_{G}(\{m\})=2$ and thus, $G_{i}\setminus\{m\}$ is connected for each $i\in\{1,2\}$. Now, $\mathcal{S}_{i}/J_{\ov{G_{i}}}$ is Cohen-Macaulay implies $\mathcal{S}_{i}/I_{i}$ is Cohen-Macaulay for each $i\in\{1,2\}$ by Remark \ref{remcmin}. Moreover, $\mathrm{depth}(\mathcal{S}_{1}/I_{1})=\mathrm{dim}(\mathcal{S}_{1}/I_{1})=m+2, \mathrm{depth}(\mathcal{S}_{2}/I_{2})=\mathrm{dim}(\mathcal{S}_{2}/I_{2})=n-m+3$. Now, we proceed by considering the following possible cases.
\medskip

\noindent \textbf{Case I.} Let $m$ be a non-free vertex in $G_{1}$ and $G_{2}$ both. Then by Lemma \ref{lemG-vunm}, $J_{G\setminus\{m\}}$ is unmixed. Therefore, $J_{{\ov{G_1}}\setminus\{m\}}$ and $J_{\ov{{G_2}}\setminus\{m\}}$ are both unmixed. Since $J_{\ov{G_1}}$ and $J_{\ov{G_2}}$ are both Cohen-Macaulay, by Hypothesis \ref{hypo}, $J_{\ov{G_i}\setminus \{m\}}$ is Cohen-Macaulay for each $i\in\{1,2\}$. Hence, $J_1$ and $J_2$ are Cohen-Macaulay by Remark \ref{remcmin} with 
\begin{equation}\label{eq3.1}
\mathrm{depth}(A_{1}/J_{1})=m\,\,\text{and}\,\,\mathrm{depth}(A_{2}/J_{2})=n-m+1
\end{equation}
Now let us focus on the ideal $I_{1}$. Since $\mathcal{S}_{1}/I_{1}$ is Cohen-Macaulay, by Lemma \ref{propcmcolon}, $\mathcal{S}_{1}/(I_{1}:y_{m})$ is Cohen-Macaulay with depth and dimension equal to $m+2$. We can write $(I_{1}:y_{m})=\big<x_{m}y_{m+1}\big>+ I_{1}^{''}$, where $I_{1}^{''}\subseteq A_{1}$. It is clear that $K[x_{m},y_{m+1}]/\big<x_{m}y_{m+1}\big>$ is Cohen-Macaulay with dimension $1$ and therefore, $A_{1}/I_{1}^{''}$ is Cohen-Macaulay with
\begin{equation}\label{eq3.2}
\mathrm{depth}(A_{1}/I_{1}^{''})=m-1.
\end{equation}
In a similar way, $\mathcal{S}_{2}/(I_{2}:x_{m})$ is Cohen-Macaulay with depth and dimension equal to $n-m+3$. We can write $(I_{2}:x_{m})=\big<x_{m-1}y_{m}\big>+ I_{2}^{''}$, where $I_{2}^{''}\subseteq A_{2}$. Therefore, $A_{2}/I_{2}^{''}$ is Cohen-Macaulay with
\begin{equation}\label{eq3.3}
\mathrm{depth}(A_{2}/I_{2}^{''})=n-m.
\end{equation}
Again, due to Proposition \ref{lemGiv}, $J_{(G_1)_m}$ is Cohen-Macaulay. Hence, $A_{1}[x_{m},y_{m}]/\mathrm{in}_{<}(J_{(G_{1})_{m}})$ is Cohen-Macaulay by Remark \ref{remcmin} and
\begin{equation}\label{eq3.4}
\mathrm{depth}(A_{1}[x_{m},y_{m}]/\mathrm{in}_{<}(J_{(G_{1})_{m}}))=m+1.
\end{equation}
Similarly, $J_{(G_2)_m}$ is Cohen-Macaulay. By Lemma \ref{lemG-vunm}, $J_{G\setminus \{m\}}$ is unmixed and so, $J_{G_{2}\setminus\{m\}}$ is unmixed. Since $J_{(G_2)_m}$ is unmixed, by Lemma \ref{lemGv-vunm}, $J_{(G_{2})_{m}\setminus\{m\}}$ is unmixed. Consider the graph $\ov{(G_2)_m}$ by attaching a whisker at $m$ to the graph $(G_2)_m$. By Remark \ref{remglu}, ${J_{\ov{(G_2)_m}}}$ is Cohen-Macaulay as $J_{(G_2)_m}$ is so. Hence, by Hypothesis \ref{hypo}, $J_{(G_{2})_{m}\setminus\{m\}}$ is Cohen-Macaulay. Thus, by Remark \ref{remcmin},
 \begin{equation}\label{eq3.5}
\mathrm{depth}(A_{2}/\mathrm{in}_{<}(J_{(G_{2})_{m}\setminus\{m\}}))=n-m+1.
\end{equation}
Now, we consider $I=\mathrm{in}_{<}(J_{G})$ to prove $J_{G}$ is Cohen-Macaulay. By our given labelling, we get the following 
\begin{align*}
(I:x_{m})&=\mathrm{in}_{<}(J_{(G_{1})_{m}})+I_{2}^{''};\\
(\big<I,x_{m}\big>:y_{m})&=I_{1}^{''}+\mathrm{in}_{<}(J_{(G_{2})_{m}\setminus\{m\}})+\big<x_{m}\big>;\\
\big<I,x_{m},y_{m}\big>&=J_{1} + J_{2}+\big<x_{m},y_{m}\big>.
\end{align*} 

\noindent Using equations (\ref{eq3.1}), (\ref{eq3.2}), (\ref{eq3.3}), (\ref{eq3.4}), (\ref{eq3.5}) and Remark \ref{remdepth}, we get
\begin{align*}
\mathrm{depth}(\mathcal{S}/(I:x_{m}))&=\mathrm{depth}(A_{1}[x_{m},y_{m}]/\mathrm{in}_{<}(J_{(G_{1})_{m}}))+\mathrm{depth}(A_{2}/I_{2}^{''})\\ &=m+1+n-m=n+1;\\
\mathrm{depth}(\mathcal{S}/(\big<I,x_{m}\big>:y_{m}))&= \mathrm{depth}(A_{1}/I_{1}^{''})+\mathrm{depth}(A_{2}/\mathrm{in}_{<}(J_{(G_{2})_{m}\setminus\{m\}}))+1\\ &= m-1+n-m+1+1\\ &=n+1;\\
\mathrm{depth}(\mathcal{S}/\big<I,x_{m},y_{m}\big>)&= \mathrm{depth}(A_{1}/J_{1})+ \mathrm{depth}(A_{2}/J_{2})\\
&=m+n-m+1=n+1.
\end{align*}

\noindent By Lemma \ref{lemheuneke}, we have $\mathrm{depth}(\mathcal{S}/I)=n+1=\mathrm{dim}(\mathcal{S}/I)$, which gives $\mathcal{S}/I$ is Cohen-Macaulay. Hence $J_{G}$ is Cohen-Macaulay by Remark \ref{remcmin}.
\medskip

\noindent \textbf{Case II.} Let $m$ be a free vertex of exactly one of $G_1$ and $G_2$. Without loss of generality, we assume $m$ is a free vertex of $G_{2}$ and not a free vertex of $G_{1}$. We claim that $J_{G_2\setminus \{m\}}$ is unmixed. Let $T\in \mathscr{C}(G_2\setminus \{m\})$ such that $\mathcal{N}_{G_2}(m)\subseteq T$. Then $T\in \mathscr{C}(\ov{G_2})$ and $c_{\ov{G_2}}(T)=\vert T\vert +1$ as $J_{\ov{G_2}}$ is unmixed. Since $v$ is a non-free vertex of $G_1$, we can choose a cutset $S$ of $G_1$  such that $v\in S$. Note that $S$ is also a cutset of $\ov{G_1}$ and $c_{\ov{G_1}}(S)=\vert S\vert +1$ as $J_{\ov{G_1}}$ is unmixed. Now, one connected component of $\ov{G_2}\setminus T$ is the edge $\{m-1,m\}$ and one connected component of $\ov{G_1}\setminus S$ is the vertex $m+1$. By \cite[Proposition 3.1]{whisker}, $T\cup S$ is a cutset of $G$ and it is clear that $c_{G}(T\cup S)=\vert T\vert+\vert S\vert$, which is a contradiction to the fact that $J_G$ is unmixed. Thus, no cutset of $G_2\setminus \{m\}$ contain $\mathcal{N}_{G_2}(m)$. Therefore, by \cite[Proposition 5.2]{acc}, $J_{\ov{{G_2}}\setminus\{m\}}$ is unmixed and hence, $J_{G_2\setminus\{m\}}$ is Cohen-Macaulay by Hypothesis \ref{hypo}. By Remark \ref{remcmin}, $A_{2}/J_2$ is Cohen-Macaulay with 
\begin{equation}\label{eq3.6}
    \mathrm{depth}(A_{2}/J_{2})=n-m+1.
\end{equation}
Again, $\mathcal{S}_{2}/(I_{2}:x_{m})$ is Cohen-Macaulay with $\mathrm{depth}(\mathcal{S}_{2}/(I_{2}:x_{m}))=n-m+3$ by Lemma \ref{propcmcolon}. We can write $(I_{2}:x_{m})=\big<x_{m-1}y_{m}\big>+ I_{2}^{''}$, where $I_{2}^{''}\subseteq A_{2}$ and so, 
\begin{equation}\label{eq3.7}
\mathrm{depth}(A_{2}/I_{2}^{''})=n-m.
\end{equation}
Since $J_{\ov{G_1}}$ is Cohen-Macaulay, by Proposition \ref{lemGiv}, it follows that $J_{(G_1)_m}$ is Cohen-Macaulay and thus, by Remark \ref{remcmin}, we get 
\begin{equation}\label{eq3.8}
\mathrm{depth}(A_{1}[x_m,y_m]/\mathrm{in}_{<}(J_{(G_1)_m}))=m+1.
\end{equation}
Now, let us focus on the ideal $I_{1}^{'}\subseteq A_1[y_m]$, where $G(I_{1}^{'})=G(I_{1})\setminus \{g\in G(I_{1}): x_{m}\mid g\}$. Then $\mathfrak{p}\in \mathrm{Ass}(I_{1}^{'})$ implies $\big<\mathfrak{p},x_{m}\big>\in \mathrm{Ass}(I_{1})$ as $x_my_{m+1}\in I_1$. Therefore, $I_{1}$ being unmixed, $\mathrm{ht}(I_{1}^{'})=\mathrm{ht}(I_{1})-1=m-1$ and so, $\mathrm{dim}(A_{1}[y_{m}]/I_{1}^{'})=(2m-1)-(m-1)=m$. Hence, by Lemma \ref{lemonenf}, $A_1[y_m]/I_{1}^{'}$ is Cohen-Macaulay with
\begin{equation}\label{eq3.9}
    \mathrm{depth}(A_{1}[y_m]/I_{1}^{'})=m.
\end{equation}

\noindent Now, due to our given labelling by looking at the admissible paths in $G$, it is easy to verify that the following hold in this case. 
\begin{align*}
I:x_{m}&=\mathrm{in}_{<}(J_{(G_{1})_{m}})+I_{2}^{''};\\
\big<I,x_{m}\big>&=I_{1}^{'}+J_{2}+\big<x_{m}\big>.
\end{align*} 
\noindent Using equations (\ref{eq3.6}), (\ref{eq3.7}), (\ref{eq3.8}), (\ref{eq3.9}), and Remark \ref{remdepth}, we get
\begin{align*}
\mathrm{depth}(\mathcal{S}/(I:x_{m}))&=\mathrm{depth}(A_{1}[x_{m},y_{m}]/\mathrm{in}_{<}(J_{(G_{1})_{m}}))+\mathrm{depth}(A_{2}/I_{2}^{''})\\ &=m+1+n-m=n+1;\\
\mathrm{depth}(\mathcal{S}/\big<I,x_{m}\big>)&= \mathrm{depth}(A_{1}[y_{m}]/I_{1}^{'})+\mathrm{depth}(A_{2}/J_{2})\\ &= m+n-m+1\\ &=n+1.
\end{align*}

\noindent Thus, $\mathrm{depth}(\mathcal{S}/I)=n+1=\mathrm{dim}(\mathcal{S}/I)$ by Lemma \ref{lemheuneke} and so, $\mathcal{S}/I$ is Cohen-Macaulay. Hence $J_{G}$ is Cohen-Macaulay by Remark \ref{remcmin}.
\end{proof}

Assuming Hypothesis \ref{hypo}, we proved Theorem \ref{thmGcm}. Now, we  prove the converse of 
Theorem \ref{thmGcm} and we do not need to assume Hypothesis \ref{hypo} in this proof.

\begin{theorem}\label{thmGbarcm}
Let $G=G_{1}\cup G_{2}$ be a connected graph such that $V(G_{1})\cap V(G_{2})=\{v\}$. Consider the graph $\overline{G_{i}}$ by attaching a whisker to the graph $G_{i}$ at the vertex $v$ for $i=1,2$. If $J_{G}$ is Cohen-Macaulay, then $J_{\ov{G_{1}}}$ and $J_{\ov{G_{2}}}$ are Cohen-Macaulay.
\end{theorem}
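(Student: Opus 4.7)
The plan is to work with the initial ideal $I = \mathrm{in}_{<}(J_G)$, which is Cohen-Macaulay by Remark \ref{remcmin}, and to deduce the Cohen-Macaulayness of $\mathcal{S}_i/I_i$ for $I_i = \mathrm{in}_{<}(J_{\overline{G_i}})$. I adopt the labelling of Setup \ref{setup} with $v = m$ and focus on $\mathcal{S}_1/I_1$; the argument for $\mathcal{S}_2/I_2$ is carried out analogously in each case below, using items (6)--(9) of the setup in place of (2)--(5). Note that $\dim(\mathcal{S}_1/I_1) = m+2$, since the height of $J_{\overline{G_1}}$ attains its minimum $m$ at $T = \emptyset \in \mathscr{C}(\overline{G_1})$. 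The main strategy is to apply Lemma \ref{lemheuneke} to $\mathcal{S}_1/I_1$ with the variables $\{x_m, y_m\}$, so that $\mathrm{depth}(\mathcal{S}_1/I_1)$ equals one of $\mathrm{depth}(\mathcal{S}_1/\langle I_1, x_m, y_m\rangle)$, $\mathrm{depth}(\mathcal{S}_1/(I_1:x_m))$, or $\mathrm{depth}(\mathcal{S}_1/(\langle I_1, x_m\rangle : y_m))$. Items (3), (4), (5) of Setup \ref{setup} together with Remark \ref{remdepth} simplify these three depths to $\mathrm{depth}(A_1/J_1)+2$, $\mathrm{depth}(A_1[x_m, y_m]/\mathrm{in}_{<}(J_{(G_1)_m}))+1$, and $\mathrm{depth}(A_1/I_1'')+3$ respectively. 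Showing each is $\geq m+2$ then forces $\mathrm{depth}(\mathcal{S}_1/I_1) = \dim(\mathcal{S}_1/I_1)$, so $\mathcal{S}_1/I_1$ is Cohen-Macaulay and $J_{\overline{G_1}}$ is Cohen-Macaulay by Remark \ref{remcmin}.

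I split into three cases based on whether $v$ is a free vertex of $G_1, G_2$. If $v$ is free in both, $G$ decomposes at the free vertex $v$, so Remark \ref{remglu} gives $J_{G_i}$ Cohen-Macaulay; since $\overline{G_i}$ further decomposes at $v$, $J_{\overline{G_i}}$ is Cohen-Macaulay directly. If $v$ is non-free in both, Corollary \ref{corbothnf} yields Cohen-Macaulayness of $J_{G \setminus \{v\}}$, $J_{G_v}$ and $J_{G_v \setminus \{v\}}$, which via Remark \ref{remcmin} gives $\mathrm{depth}(A_1/J_1) \geq m$ and $\mathrm{depth}(A_1[x_m, y_m]/\mathrm{in}_{<}(J_{(G_1)_m})) = m+1$. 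The third depth is extracted from item (10) of Setup \ref{setup}: since $(I:y_m) = I_1'' + \mathrm{in}_{<}(J_{(G_2)_m})$, Lemma \ref{propcmcolon} applied to $\mathcal{S}/I$ with monomial $y_m$ gives $\mathrm{depth}(\mathcal{S}/(I:y_m)) = n+1$; splitting via Remark \ref{remdepth} and using that $\mathrm{depth}(A_2[x_m, y_m]/\mathrm{in}_{<}(J_{(G_2)_m})) = n-m+2$ (from Proposition \ref{lemGiv}) back-solves to $\mathrm{depth}(A_1/I_1'') = m-1$. All three target bounds thus hold.

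The main obstacle is the third case, where $v$ is free in exactly one of $G_1, G_2$, since Corollary \ref{corbothnf} and Lemma \ref{lembothnf} no longer apply. Take, without loss of generality, $v$ free in $G_2$ (and hence non-free in $G_1$); then $\overline{G_2}$ decomposes at the free vertex $m$, so $J_{\overline{G_2}}$ Cohen-Macaulay reduces to $J_{G_2}$ Cohen-Macaulay, and $J_{G_2} = J_{(G_2)_m}$ is Cohen-Macaulay by Proposition \ref{lemGiv}. For $J_{\overline{G_1}}$, I instead apply Lemma \ref{lemheuneke} to the single variable $x_m$ (mirroring Case II of Theorem \ref{thmGcm}): the two resulting depths $\mathrm{depth}(\mathcal{S}_1/\langle I_1, x_m\rangle)$ and $\mathrm{depth}(\mathcal{S}_1/(I_1:x_m))$ equal $\mathrm{depth}(A_1[y_m]/I_1')+2$ and $\mathrm{depth}(A_1[x_m, y_m]/\mathrm{in}_{<}(J_{(G_1)_m}))+1$ by items (2) and (3) of Setup \ref{setup}. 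Lemma \ref{lemonenf} supplies Cohen-Macaulayness of $I_1'$ with $\dim(A_1[y_m]/I_1') = m$, and Proposition \ref{lemGiv} supplies that of $J_{(G_1)_m}$; both depths are then $m+2$, which completes the proof. Notably, the hypothesis \ref{hypo} is not invoked anywhere, in line with the statement.
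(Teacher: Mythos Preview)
Your proof is correct and follows essentially the same route as the paper: both use the labelling of Setup \ref{setup}, split into cases according to whether $v$ is free in $G_1$ or $G_2$, extract the depths of $A_1/J_1$, $A_1/I_1''$, $\mathrm{in}_{<}(J_{(G_1)_m})$, and $A_1[y_m]/I_1'$ from colon ideals of $I$ via Lemma \ref{propcmcolon} and Lemma \ref{lemonenf}, and then conclude with Lemma \ref{lemheuneke}. One small correction: in your non-free case, Corollary \ref{corbothnf} gives Cohen--Macaulayness of $J_{G_v}$, not of $J_{(G_1)_m}$ (once all neighbours of $m$ are joined, $m$ is no longer a cut vertex of $G_v$, so $G_v$ does not decompose into $(G_1)_m$ and $(G_2)_m$); for $J_{(G_1)_m}$ you should invoke Proposition \ref{lemGiv} directly, as you already do for $J_{(G_2)_m}$. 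Also, the assertion $\dim(A_1[y_m]/I_1')=m$ is not part of Lemma \ref{lemonenf}; it follows because $J_{\overline{G_1}}$ is unmixed by \cite[Lemma 3.13]{whisker}, hence $I_1$ is unmixed of height $m$ by Lemma \ref{lemunmin}, and every associated prime of $I_1'$ extends to one of $I_1$ by adding $x_m$.
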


\begin{proof}
We label the vertices of $G$ as described in Setup \ref{setup} and in this case, $v=m$. We will consider two possible cases:
\medskip

\noindent \textbf{Case I.} $m$ is not a free vertex of both $G_{1}$ and $G_{2}$. Then by Lemma \ref{lembothnf}, $J_{G_{i}\setminus\{m\}}$ is Cohen-Macaulay and hence, by Remark \ref{remcmin}, $A_{i}/J_{i}$ is Cohen-Macaulay for each $i\in\{1,2\}$ with
\begin{equation}\label{eq3.10}
    \mathrm{depth}(A_1/J_1)=m\,\,\text{and}\,\,\mathrm{depth}(A_2/J_2)=n-m+1.
\end{equation}
Now, $J_G$ is Cohen-Macaulay implies $\mathcal{S}/I$ is Cohen-Macaulay by Remark \ref{remcmin}. Therefore, by Lemma \ref{propcmcolon}, $\mathcal{S}/(I:x_m)$ and $\mathcal{S}/(I:y_m)$ are Cohen-Macaulay. From our labelling of vertices of $G$, we have
\begin{align*}
(I:x_{m})&=\mathrm{in}_{<}(J_{(G_{1})_{m}})+I_{2}^{''};\\
(I:y_{m})&=I_{1}^{''}+\mathrm{in}_{<}(J_{(G_{2})_{m}}).
\end{align*}
Thus, $A_1[x_m,y_m]/\mathrm{in}_{<}(J_{(G_1)_m})$ is Cohen-Macaulay. Due to Remark \ref{remcmin}, $J_{(G_1)_m}$ is Cohen-Macaulay and 
\begin{equation}\label{eq3.11}
\mathrm{depth}(A_1[x_m,y_m]/\mathrm{in}_{<}(J_{(G_1)_m}))=m+1.
\end{equation}
Since $\mathcal{S}/I$ is Cohen-Macaulay with $\mathrm{depth}(\mathcal{S}/I)=n+1$, we have $A_2/I_{2}^{''}$ is Cohen-Macaulay with 
\begin{equation}\label{eq3.12}
   \mathrm{depth}(A_2/I_{2}^{''})=(n+1)-(m+1)=n-m. 
\end{equation}
Similarly, we get $J_{(G_2)_m}$ is Cohen-Macaulay with
\begin{equation}\label{eq3.13}
\mathrm{depth}(A_2[x_m,y_m]/\mathrm{in}_{<}(J_{(G_2)_m}))=n-m+2,
\end{equation}
and $A_1/I_{1}^{''}$ is Cohen-Macaulay with 
\begin{equation}\label{eq3.14}
   \mathrm{depth}(A_1/I_{1}^{''})=(n+1)-(n-m+2)=m-1. 
\end{equation}
Now, by our given labelling, it follows that $(I_1:x_m)=\mathrm{in}_{<}(J_{(G_1)_m})+\big<y_{m+1}\big>$, $(\big<I_1,x_m\big>:y_m)=\big<I_{1}^{''},x_m\big>$ and $(I_1,x_m,y_m)=(J_1,x_m,y_m)$. Therefore, using equations \ref{eq3.10}, \ref{eq3.11}, \ref{eq3.14} and Remark \ref{remdepth}, we get the following:
\begin{align*}
    \mathrm{depth}(\mathcal{S}_{1}/(I_{1}:x_{m}))&=\mathrm{depth}(A_1[x_m,y_m]/\mathrm{in}_{<}(J_{(G_1)_m}))+1\\&=m+2;\\
 \mathrm{depth}(\mathcal{S}_{1}/(\big<I_{1},x_{m}\big>:y_{m}))&=\mathrm{depth}(A_1/I_{1}^{''})+3\\&=m+2;\\
 \mathrm{depth}(\mathcal{S}_{1}/\big<I_{1},x_{m},y_{m}\big>)&=\mathrm{depth}(A_1/J_1)+2\\&=m+2.
\end{align*}
By Lemma \ref{lemheuneke}, it follows that $\mathrm{depth}(\mathcal{S}_{1}/I_{1})=m+2$. By \cite[Lemma 3.13]{whisker}, $J_{\ov{G_{1}}}$ is unmixed, which gives $\mathrm{dim}(\mathcal{S}_{1}/I_{1})=m+2$ and so, $\mathcal{S}_{1}/I_{1}$ is Cohen-Macaulay. In a similar way, using equations \ref{eq3.10}, \ref{eq3.12}, \ref{eq3.13} and Remark \ref{remdepth}, we get 
\begin{align*}
    \mathrm{depth}(\mathcal{S}_{2}/(I_{2}:y_{m}))&=\mathrm{depth}(A_1[x_m,y_m]/\mathrm{in}_{<}(J_{(G_2)_m}))+1\\&=n-m+3;\\
    \mathrm{depth}(\mathcal{S}_{2}/(\big<I_{2},y_{m}\big>:x_{m}))&=\mathrm{depth}(A_2/I_{2}^{''})+3\\&=n-m+3=m+2;\\
    \mathrm{depth}(\mathcal{S}_{2}/\big<I_{2},x_{m},y_{m}\big>)&=\mathrm{depth}(A_2/J_2)+2\\&=n-m+3.
\end{align*}
Therefore, by Lemma \ref{lemheuneke}, $\mathrm{depth}(\mathcal{S}_{2}/I_{2})=n-m+3$. Since by \cite[Lemma 3.13]{whisker}, $J_{\ov{G_{2}}}$ is unmixed, $\mathrm{dim}(\mathcal{S}_{2}/I_{2})=n-m+3$ and so, $\mathcal{S}_{2}/I_{2}$ is Cohen-Macaulay. Hence, $J_{\ov{G_{1}}}$ and $J_{\ov{G_{2}}}$ are Cohen-Macaulay by Remark \ref{remcmin}.
\medskip

\noindent \textbf{Case II.} Let $m$ be a free vertex of at least one of $G_1$ and $G_2$. If $m$ is a free vertex of both $G_1$ and $G_2$, then the result holds by Remark \ref{remglu}. So, we may consider $m$ to be a free vertex of exactly one of $G_1$ and $G_2$. Without loss of generality, we assume $m$ is a free vertex of $G_{2}$ and not a free vertex of $G_{1}$. In this case, from our labelling of vertices of $G$ and looking at admissible paths in $G$, we get the following:
\begin{align*}
I:x_{m}&=\mathrm{in}_{<}(J_{(G_{1})_{m}})+I_{2}^{''};\\
\big<I,x_{m}\big>&=I_{1}^{'}+J_{2}+\big<x_{m}\big>.
\end{align*}
Since $J_{G}$ is Cohen-Macaulay, by Lemma \ref{lemonenf}, $A_{1}[y_m]/I_{1}^{'}$ and $A_2/J_2$ is Cohen-Macaulay. Again, $\mathcal{S}/I$ is Cohen-Macaulay implies $\mathcal{S}/(I:x_m)$ is Cohen-Macaulay by Lemma \ref{propcmcolon} and $\mathrm{depth}(\mathcal{S}/I)=\mathrm{depth}(\mathcal{S}/(I:x_m))$. Thus, $\mathrm{in}_{<}(J_{(G_1)_{m}})$ and $I_{2}^{''}$ are Cohen-Macaulay. Now, $\mathrm{depth}(\mathcal{S}/J_G)=n+1$, $\mathrm{depth}(A_{2}/J_{G_2\setminus\{m\}})=n-m+1$ and $\mathrm{depth}(J_{(G_1)_{m}})=m+1$. Therefore, using Remark \ref{remdepth} and Remark \ref{remcmin}, we get the following: 
\begin{align*}
\mathrm{depth}(A_{1}[x_{m},y_{m}]/\mathrm{in}_{<}(J_{(G_{1})_{m}}))&=m+1;\\
\mathrm{depth}(A_{2}/I_{2}^{''})&=(n+1)-(m+1)\\&=n-m;\\
\mathrm{depth}(A_{2}/J_{2})&=n-m+1;\\
\mathrm{depth}(A_{1}[y_{m}]/I_{1}^{'})&=(n+1)-(n-m+1)\\&=m.
\end{align*}

\noindent Hence, by simple calculation, we get $\mathrm{depth}(\mathcal{S}_{1}/(I_{1}:x_{m}))=m+1+1=m+2$; $\mathrm{depth}(\mathcal{S}_{1}/\big<I_{1},x_{m}\big>)=m+2$. Therefore, by Lemma \ref{lemheuneke}, $\mathrm{depth}(\mathcal{S}_{1}/I_{1})=m+2$. By \cite[Lemma 3.13]{whisker}, $J_{\ov{G_{1}}}$ is unmixed, which gives $\mathrm{dim}(\mathcal{S}_{1}/I_{1})=m+2$ and so, $\mathcal{S}_{1}/I_{1}$ is Cohen-Macaulay. Again, observe that 
$$\mathrm{depth}(\mathcal{S}_{2}/(I_{2}:x_{m}))=n-m+2+\mathrm{depth}(K[x_{m-1},y_{m}]/\big<x_{m-1}y_{m}\big>)=n-m+3,$$ and $\mathrm{depth}(\mathcal{S}_{2}/\big<I_{2},x_{m}\big>)=\mathrm{depth}(A_{2}/J_{2})+\mathrm{depth}(K[x_{m-1},y_{m-1},y_{m}]/\big<x_{m-1}y_{m}\big>)=n-m+3$. Thus, by Lemma \ref{lemheuneke}, $\mathrm{depth}(\mathcal{S}_{2}/I_{2})=n-m+3$. Since by \cite[Lemma 3.13]{whisker}, $J_{\ov{G_{2}}}$ is unmixed, $\mathrm{dim}(\mathcal{S}_{2}/I_{2})=n-m+3$ and so, $\mathcal{S}_{2}/I_{2}$ is Cohen-Macaulay. Therefore, $J_{\ov{G_{1}}}$ and $J_{\ov{G_{2}}}$ are Cohen-Macaulay by Remark \ref{remcmin}.
\end{proof}

\begin{definition}\label{defblockwhisker}{\rm
Let $G$ be a connected graph such that $J_{G}$ is unmixed and $B$ be a block of $G$. Let $V=\{v_{1},\ldots,v_{k}\}$ be the set of cut vertices of $G$ belonging to $V(B)$. Then, we can write
\begin{align}\label{blockdec}
G=B\cup\big(\bigcup_{i=1}^k G_{i}\big),
\end{align}
where $V(G_{i})\cap V(B)=\{v_{i}\}$, for each $1\leq i\leq k$, and the connected components of $G\setminus V$ are $B\setminus V$ (may be empty), $G_{1}\setminus\{v_{1}\},\ldots, G_{k}\setminus\{v_{k}\}$.

Considering the decomposition \ref{blockdec}, for $W=\{v_{s_{1}},\ldots,v_{s_{r}}\}\subseteq V$, we define a new graph $\overline{B}^{W}$ such that
\begin{enumerate}
\item[$\bullet$] $V(\overline{B}^{W})=V(B)\cup \big(\bigcup_{v_{i}\not\in W} V(G_{i})\big) \cup \{f_{v_{{s_{1}}}},\ldots, f_{v_{{s_{r}}}}\},$

\item[$\bullet$] $E(\overline{B}^W)=E(B)\cup \big(\bigcup_{v_{i}\not\in W} E(G_{i})\big)\cup \{\{v_{s_{i}},f_{v_{{s_{i}}}}\}\mid i=1,\ldots, r\}.$
\end{enumerate}
By $\overline{B}$ \textit{with respect to} $G$, we mean $\overline{B}^V$ 
and call it a \textit{block with whiskers} of $G$ (defined in \cite{s2acc}). 
Sometimes we simply write $\overline{B}$ if the graph is clear 
from the context. In simple words, $\overline{B}$ stands for 
that graph, which is obtained by attaching whiskers in $B$ to all the cut vertices $v_{i}$ of $G$ belonging  
to $V(B)$, replacing $G_{i}$'s. 
}
\end{definition}

As an application of Theorem \ref{thmGcm} and Theorem \ref{thmGbarcm}, we settle the open problem \cite[Question 5.11]{whisker} in the following corollaries.

 \begin{corollary}\label{corblockcm1}
Let $G$ be a graph such that $J_{G}$ is unmixed. If $J_{\overline{B}}$ is Cohen-Macaulay for each block $B$ of $G$, where $\overline{B}$ denotes the corresponding block with whiskers of $G$, then $J_{G}$ is Cohen-Macaulay under Hypothesis \ref{hypo}.
\end{corollary}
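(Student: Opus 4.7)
My plan is to prove the corollary by induction on the number of blocks of $G$, using Theorem \ref{thmGcm} as the central decomposition. If $G$ has only one block, then $G$ is itself the unique block $B$ and has no cut vertices, so $\overline{B} = G$; the hypothesis that $J_{\overline{B}}$ is Cohen-Macaulay is then literally the conclusion, handling the base case.

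For the inductive step, I would assume $G$ has at least two blocks, pick an end-block $B_0$ of $G$ with unique cut vertex $v$ in $V(B_0)$, and decompose $G = G_1 \cup G_2$ with $G_2 = B_0$ and $V(G_1) \cap V(G_2) = \{v\}$. Because $B_0$ is an end-block, $\overline{G_2}$ coincides with $\overline{B_0}$ (as a block with whiskers in $G$), which is Cohen-Macaulay by hypothesis. Theorem \ref{thmGcm}, together with $J_G$ unmixed and Hypothesis \ref{hypo}, then reduces the goal to showing $J_{\overline{G_1}}$ is Cohen-Macaulay, which I plan to derive from the inductive hypothesis applied to $\overline{G_1}$. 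The required hypotheses on $\overline{G_1}$ are that $J_{\overline{G_1}}$ is unmixed (a consequence of $J_G$ unmixed as in \cite[Lemma 3.13]{whisker}) and that for every block $B'$ of $\overline{G_1}$, the ideal of the corresponding block with whiskers in $\overline{G_1}$ is Cohen-Macaulay. The blocks of $\overline{G_1}$ are those of $G_1$ together with the new whisker edge at $v$; for the whisker edge the corresponding block with whiskers is $P_3$, whose binomial edge ideal is well known to be Cohen-Macaulay, while for a block $B'$ of $G_1$ a direct comparison of cut-vertex structures (distinguishing the cases $v \in V(B')$ and $v \notin V(B')$) yields that the block with whiskers of $B'$ in $\overline{G_1}$ equals $\overline{B'}$ in $G$, Cohen-Macaulay by assumption on $G$.

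The chief obstacle is choosing an induction measure that makes $\overline{G_1}$ strictly smaller than $G$. When $|V(B_0)| \geq 3$, the identity $|V(\overline{G_1})| = |V(G)| - |V(B_0)| + 2 < |V(G)|$ lets strong induction on $|V(G)|$ close. The delicate scenario is when $B_0 = \{v, w\}$ is a pendant edge, for then $\overline{G_1} \cong G$ and a direct recursion is circular. I would handle this by sub-cases on whether $v$ is a free vertex of $G_1$: if $v$ is free in $G_1$, then $G$ is decomposable at $v$, and Remark \ref{remglu} equates Cohen-Macaulayness of $J_G$ with that of $J_{G_1}$, which has strictly fewer vertices and inherits the corollary's hypotheses (for each block $B'$ of $G_1$ containing $v$, one uses that $\overline{B'}$ in $G$ itself decomposes at $v$ as the block with whiskers of $B'$ in $G_1$ glued with a whisker, so Remark \ref{remglu} pulls back Cohen-Macaulayness); if $v$ is not free in $G_1$, a cutset analysis at $v$ exploiting unmixedness of $J_G$ should show that either a non-pendant end-block exists elsewhere in $G$ (returning to the first induction case), or $G$ itself equals $\overline{B}$ for some interior block $B$, in which case the hypothesis on $G$ supplies $J_G$ Cohen-Macaulay at once.
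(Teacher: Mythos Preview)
Your overall strategy---induct on $|V(G)|$, decompose $G = G_1 \cup G_2$ at a cut vertex, verify via \cite[Lemma~3.13]{whisker} that $J_{\overline{G_i}}$ is unmixed and that the block-with-whiskers hypothesis descends to each $\overline{G_i}$, then invoke Theorem~\ref{thmGcm}---is exactly the paper's. The divergence is that you insist $G_2$ be an \emph{end-block} $B_0$ so that $\overline{G_2}=\overline{B_0}$ is Cohen--Macaulay directly from the hypothesis, sparing yourself induction on one side; but this manufactures the pendant-edge obstruction, and your sub-case~(b) is where the argument breaks. You assert that when $B_0=\{v,w\}$ is pendant with $v$ non-free in $G_1$, ``a cutset analysis at $v$'' forces either a non-pendant end-block to exist or $G=\overline{B}$. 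No such analysis is carried out, and the dichotomy is false as stated: two triangles glued at a vertex, with a whisker attached at each far corner, give an unmixed $G$ in which every end-block is a pendant $K_2$ yet $G\neq\overline{B}$ for any block $B$. (Your sub-case~(a) happens to apply there, since each pendant's cut vertex is simplicial in its triangle, but that only shows the dichotomy you wrote in~(b) cannot be what actually closes the argument.)

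The paper avoids the whole detour by not privileging end-blocks. It observes directly that either $G=\overline{B}$ for some block $B$ (done by hypothesis), or there is a cut vertex $v$ at which \emph{both} $\overline{G_1}$ and $\overline{G_2}$ have strictly fewer than $|V(G)|$ vertices, and then inducts on both sides simultaneously. The existence of such a $v$ is a short block-cut-tree argument: unmixedness forces every cut vertex to lie in exactly two blocks, so if every cut vertex separated off only a single pendant vertex, all cut vertices would sit in one common non-whisker block $B^*$ and $G=\overline{B^*}$. Replacing your end-block manoeuvre with this observation closes the gap and collapses your sub-cases~(a) and~(b) into a single clean step.
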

\begin{proof}
     We use induction on the number of vertices $n$ of $G$. For $n=1$ or $n=2$, the result follows trivially, and so, we assume $n>3$. If there exists a block $B$ of $G$ for which $\ov{B}=G$, then we are done. Suppose there is no such block of $G$. Then we can decompose $G$ as $G=G_{1}\cup G_{2}$ with $V(G_{1})\cap V(G_{2})=\{v\}$ such that $\ov{G_{i}}$ contains less than $n$ vertices, where $\ov{G_{i}}$ is obtained by attaching a whisker to $G_{i}$ at $v$ for $i=1,2$. By \cite[Lemma 3.13]{whisker}, we have $J_{\ov{G_{1}}}$ and $J_{\ov{G_{2}}}$ are unmixed. Note that each block of $\ov{G_{i}}$ is either a $K_{2}$ or a block of $G$. If $B$ is a block of $\ov{G_{i}}$ and $G$ both, then $\ov{B}$ with respect to both $\ov{G_{i}}$ and $G$ are same. Then by induction hypothesis, we get $J_{\ov{G_{1}}}$ and $J_{\ov{G_{2}}}$ are Cohen-Macaulay. Hence, $J_{G}$ is Cohen-Macaulay by Theorem \ref{thmGcm}.
\end{proof}
\begin{corollary}\label{corblockcm2}
Let $G$ be a graph. If $J_{G}$ is Cohen-Macaulay, then $J_{\overline{B}}$ is Cohen-Macaulay for each block $B$ of $G$, where $\overline{B}$ denotes the corresponding block with whiskers of $G$.
\end{corollary}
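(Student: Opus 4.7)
The plan is to argue by induction on $n = |V(G)|$, running the argument of Corollary \ref{corblockcm1} in reverse and using Theorem \ref{thmGbarcm} in place of Theorem \ref{thmGcm}. The base cases $n \le 2$ are trivial because $G$ is then a single block $B$ with $\overline{B} = G$, so the hypothesis that $J_G$ is Cohen-Macaulay is directly the desired conclusion.

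For $n \ge 3$, I would split into two cases mirroring the proof of Corollary \ref{corblockcm1}. If there exists a block $B_0$ of $G$ with $\overline{B_0} = G$, then $J_{\overline{B_0}} = J_G$ is Cohen-Macaulay by hypothesis, and every other block of $G$ is forced to be a $K_2$-whisker $\{v,u\}$ with $v\in V(B_0)$ a cut vertex of $G$ and $u$ a leaf, for which $\overline{B}$ is the path $P_3$; the binomial edge ideal of $P_3$ is well-known to be Cohen-Macaulay. Otherwise, exactly as in Corollary \ref{corblockcm1}, we can find a cut vertex $v$ and a decomposition $G = G_1 \cup G_2$ with $V(G_1) \cap V(G_2) = \{v\}$ such that both $\overline{G_1}$ and $\overline{G_2}$ (each $G_i$ with a new whisker attached at $v$) have strictly fewer than $n$ vertices. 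Theorem \ref{thmGbarcm} then yields that $J_{\overline{G_1}}$ and $J_{\overline{G_2}}$ are Cohen-Macaulay.

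For each block $B$ of $G$, assume without loss of generality that $B$ is a block of $G_1$, hence also a block of $\overline{G_1}$ (attaching a whisker at $v$ only introduces one extra $K_2$ block). The crucial step is to verify that $\overline{B}$ computed with respect to $G$ coincides with $\overline{B}$ computed with respect to $\overline{G_1}$; since these two ``block-with-whiskers'' graphs differ only in which vertices of $V(B)$ receive a pendant whisker, it suffices to check that a vertex $u \in V(B)$ is a cut vertex of $G$ exactly when it is a cut vertex of $\overline{G_1}$. For $u \ne v$ this can be seen by comparing the components of $G \setminus \{u\}$ with those of $\overline{G_1} \setminus \{u\}$ and observing that $G_2$ (attached at $v$) plays the same connectivity role in $G$ that the pendant edge $\{v,f_v\}$ plays in $\overline{G_1}$; for $u = v \in V(B)$, the vertex $v$ is trivially a cut vertex of both ambient graphs. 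Granting this identity, the induction hypothesis applied to $\overline{G_1}$ gives that $J_{\overline{B}}$ with respect to $\overline{G_1}$ is Cohen-Macaulay, and the identity then yields that $J_{\overline{B}}$ with respect to $G$ is Cohen-Macaulay, closing the induction.

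The main obstacle I expect is, just as in Corollary \ref{corblockcm1}, the small combinatorial argument guaranteeing in the second case that a decomposition with $|V(\overline{G_i})| < n$ actually exists; this amounts to checking that if no such decomposition is available, then every branch of $G$ at every cut vertex is a single whisker, which already forces $\overline{B_0} = G$ for some block $B_0$ and puts us in the first case. The cut-vertex bookkeeping behind the identity $\overline{B}_{\text{w.r.t.}\,G} = \overline{B}_{\text{w.r.t.}\,\overline{G_1}}$ is somewhat delicate but is directly parallel to the analogous step in Corollary \ref{corblockcm1}, so no genuinely new difficulty should arise there.
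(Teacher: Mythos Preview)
Your proposal is correct and follows essentially the same approach as the paper: induction on $|V(G)|$, an application of Theorem \ref{thmGbarcm} to pass to a smaller graph, and the verification that $\overline{B}$ is unchanged under the reduction. The only cosmetic difference is that the paper fixes the block $B$ first and then chooses a decomposition $G=G_i\cup B'$ tailored to $B$ (replacing a single non-$K_2$ branch $G_i$ by a whisker), whereas you pick one global decomposition $G=G_1\cup G_2$ and handle all blocks simultaneously; both routes rest on the same ingredients and the same bookkeeping about cut vertices.
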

\begin{proof}
$J_{G}$ is Cohen-Macaulay implies $J_{G}$ is unmixed. We proceed by induction on the number of vertices $n$ of $G$. If $n=1$ or $n=2$, then $G$ is complete, and the result holds easily. Assume $n>2$. Let $B$ be a block of $G$ and $V=\{v_{1},\ldots,v_{k}\}$ be the set of cut vertices of $G$ belong to $V(B)$. If $\ov{B}$ is $G$, then the proof follows immediately. Let $\ov{B}$ is not same as $G$. Then there exists a $i\in\{1,\ldots,k\}$ for which $G_{i}$ is not a $K_{2}$. Consider the decomposition $G=G_{i}\cup B^{'}$, where $B^{'}=\ov{B}^{\{v_{i}\}}\setminus\{f_{v_{i}}\}$. Then $J_{\ov{B}^{\{v_{i}\}}}$ is Cohen-Macaulay by Theorem \ref{thmGbarcm}. Observe that $B$ is also a block in $\ov{B}^{\{v_{i}\}}$ and $\ov{B}$ with respect to $G$ is same as $\ov{B}$ with respect to $\ov{B}^{\{v_{i}\}}$. Therefore, $J_{\ov{B}}$ is Cohen-Macaulay by induction hypothesis.
\end{proof}

In \cite{whisker}, we have partially solved the open problem \cite[Problem 7.2]{acc}. Now, using Theorem \ref{thmGcm} and Theorem \ref{thmGbarcm}, we settle the open problem \cite[Problem 7.2]{acc} in the following Corollary \ref{coridentify} under Hypothesis \ref{hypo}. That is, it is enough to prove Hypothesis \ref{hypo} for \cite[Problem 7.2]{acc}, and thus, for the class of strongly unmixed binomial edge ideals, the following corollary holds.

\begin{corollary}\label{coridentify}
Let $G=G_{1}\cup G_{2}$, with $V(G_{1})\cap V(G_{2})=\{v\}$ and $H=H_{1}\cup H_{2}$, with $V(H_{1})\cap V(H_{2})=\{w\}$ be two distinct connected graphs, such that, $J_{G\setminus\{v\}},J_{H\setminus\{w\}}$ are unmixed. Consider the graph $F_{ij}=G_{i}\cup H_{j}$, 
identifying the vertices $v$ and $w$, labelling as $v$, i.e., 
$V(G_{i})\cap V(H_{j})=\{v\}$, for $i,j\in\{1,2\}$. Then, assuming Hypothesis \ref{hypo}, $J_{G}$ and $J_{H}$ are Cohen-Macaulay imply $J_{F_{ij}}$ is Cohen-Macaulay for every $i,j\in\{1,2\}$.
\end{corollary}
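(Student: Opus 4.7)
The strategy is to chain together the two main theorems of the section. First I would invoke Theorem \ref{thmGbarcm} on the two given decompositions $G = G_1 \cup G_2$ and $H = H_1 \cup H_2$ separately: since $J_G$ and $J_H$ are Cohen-Macaulay, this immediately yields that each of $J_{\overline{G_1}}$, $J_{\overline{G_2}}$, $J_{\overline{H_1}}$, $J_{\overline{H_2}}$ is Cohen-Macaulay. Because the whiskered graph $\overline{G_i}$ depends only on $G_i$ and the attachment vertex $v$, these are precisely the two whisker-attached graphs arising from the decomposition $F_{ij} = G_i \cup H_j$ at $v$.

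Next I would apply Theorem \ref{thmGcm} to $F_{ij} = G_i \cup H_j$, which under Hypothesis \ref{hypo} produces $J_{F_{ij}}$ Cohen-Macaulay \emph{provided} $J_{F_{ij}}$ is unmixed. Thus the heart of the proof reduces to establishing unmixedness of $J_{F_{ij}}$; the remaining Cohen-Macaulay conclusion is then a black-box invocation of Theorem \ref{thmGcm}.

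To verify unmixedness, I would analyze an arbitrary cutset $T \in \mathscr{C}(F_{ij})$ by cases on whether $v \in T$. If $v \notin T$, write $T = T_1 \sqcup T_2$ with $T_1 \subseteq V(G_i) \setminus \{v\}$ and $T_2 \subseteq V(H_j) \setminus \{v\}$. A routine check shows $T_1 \in \mathscr{C}(\overline{G_i})$ and $T_2 \in \mathscr{C}(\overline{H_j})$, while $c_{F_{ij}}(T) = c_{G_i}(T_1) + c_{H_j}(T_2) - 1$ (the $-1$ accounting for the $v$-containing components on the two sides merging through $v$); unmixedness of $J_{\overline{G_i}}$ and $J_{\overline{H_j}}$ then forces $c_{F_{ij}}(T) = |T|+1$. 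If $v \in T$, write $T = \{v\} \cup T_1 \cup T_2$ analogously; then $T_1 \in \mathscr{C}(G_i \setminus \{v\})$ and $T_2 \in \mathscr{C}(H_j \setminus \{v\})$, and the unmixedness of $J_{G_i \setminus \{v\}}$, $J_{H_j \setminus \{v\}}$ (inherited from the given unmixedness of $J_{G \setminus \{v\}}$ and $J_{H \setminus \{w\}}$, since $G \setminus \{v\} = (G_1 \setminus \{v\}) \sqcup (G_2 \setminus \{v\})$) together with the connectedness of $G_i \setminus \{v\}$ and $H_j \setminus \{v\}$ (which I would deduce by testing $\{v\} \in \mathscr{C}(\overline{G_i})$ against the unmixedness of $J_{\overline{G_i}}$) yield $c_{F_{ij}}(T) = (|T_1|+1) + (|T_2|+1) = |T|+1$.

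The main obstacle is the bookkeeping in the cutset analysis above: one must carefully verify the translations between cutsets of $F_{ij}$ and cutsets of the auxiliary graphs $\overline{G_i}$, $\overline{H_j}$, $G_i \setminus \{v\}$, $H_j \setminus \{v\}$, as well as the component-count formulas. Once $J_{F_{ij}}$ is shown to be unmixed, Theorem \ref{thmGcm} applied under Hypothesis \ref{hypo} to each of the four choices $(i,j) \in \{1,2\} \times \{1,2\}$ delivers the corollary.
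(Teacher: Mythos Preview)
Your approach is essentially the same as the paper's: apply Theorem \ref{thmGbarcm} to obtain Cohen--Macaulayness of $J_{\overline{G_i}}$ and $J_{\overline{H_j}}$, establish unmixedness of $J_{F_{ij}}$, and then invoke Theorem \ref{thmGcm} under Hypothesis \ref{hypo}. The only difference is that the paper cites \cite[Theorem 3.19]{whisker} for the unmixedness of $J_{F_{ij}}$, whereas you supply a direct cutset analysis; your argument is correct and amounts to reproving that cited result inline.
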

\begin{proof}
Consider the graphs $\ov{G_{i}}$ by attaching a whisker to $G_{i}$ at $v$ and $\ov{H_{i}}$ by attaching a whisker to $H_{i}$ at $w$ for $i=1,2$. Now, $J_{G}$ and $J_{H}$ are Cohen-Macaulay imply $J_{\ov{G_{1}}}$, $J_{\ov{G_{2}}}$, $J_{\ov{H_{1}}}$, $J_{\ov{H_{2}}}$ are Cohen-Macaulay by Theorem \ref{thmGbarcm}. From \cite[Theorem 3.19]{whisker}, we have $J_{F_{ij}}$ is unmixed for every $i,j\in \{1,2\}$. Hence, $J_{F_{ij}}$ is Cohen-Macaulay by Theorem \ref{thmGcm} for every $i,j\in\{1,2\}$.
\end{proof}

\section{Cohen-Macaulay Property with Girth of Graphs}\label{secgirth}
Let us first recall the definition of the girth of a graph.

\begin{definition}{\rm
The \textit{girth} of a simple graph $G$ is the length of a shortest induced cycle in $G$. If $G$ has no induced cycle, then the girth of $G$ is considered infinity. We denote the girth of $G$ by $\mathrm{girth}(G)$.
}
\end{definition}

There are lots of studies in Graph Theory on the girth of graphs, and it has many rich applications in several areas. For a planar graph, the girth of that graph is the edge connectivity of its dual graph, and vice versa \cite{thom83}. The girth of graphs is also important in the study of embedding of graphs. In particular, it is used to check the planarity of a graph using Euler's formula as we have $2m>gf$, where $m$ is the number of edges, $f$ is the number of faces, and $g$ is the girth of the graph. Hence, it would be an interesting study to connect the girth of graphs with some algebraic properties. We give a relation between the Cohen-Macaulay property of a binomial edge ideal and the girth of the corresponding graph in the following theorem.

\begin{theorem}\label{thmgirth}
Let $G$ be a graph. If $J_{G}$ is Cohen-Macaulay, then $\mathrm{girth}(G)\leq 4$ or $\mathrm{girth}(G)=\infty$.
\end{theorem}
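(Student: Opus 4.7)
The plan is to assume for contradiction that $J_G$ is Cohen-Macaulay and $G$ contains an induced cycle $C\colon v_1 v_2 \cdots v_n v_1$ with $n \geq 5$, and derive a contradiction in three steps.

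First, I would reduce to the base case $n=5$ by iterating Proposition~\ref{propGv}. For any $v_k \in V(C)$, the vertex set $V(C) \setminus \{v_k\}$ induces a cycle of length $n-1$ in $G_{v_k}$: the only new edge added inside this set by the construction of $G_{v_k}$ comes from pairs in $\mathcal{N}_G(v_k) \cap (V(C)\setminus\{v_k\})$, and since $C$ is induced this intersection equals $\{v_{k-1}, v_{k+1}\}$, so only the single edge $\{v_{k-1}, v_{k+1}\}$ is inserted, closing the path $v_{k+1}-v_{k+2}-\cdots-v_{k-1}$ into an induced $(n{-}1)$-cycle. Since $J_{G_{v_k}}$ is Cohen-Macaulay by Proposition~\ref{propGv}, iterating this reduction allows one to assume $n=5$.

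Next, because $C_5$ is $2$-connected it is contained in a single block $B$ of $G$, and Corollary~\ref{corblockcm2} gives that $J_{\overline{B}}$ is Cohen-Macaulay. Replacing $G$ by $\overline{B}$, I may assume $G$ is the biconnected block $B$ with whiskers attached at the cut vertices of the original $G$ that lie in $V(B)$, and that $G$ still contains an induced $C_5$ inside $B$.

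Finally, I would exhibit the contradiction via a cutset computation. Because $C$ is induced and $n=5$, each $v_i \in V(C)$ has non-adjacent cycle-neighbors $v_{i-1}, v_{i+1}$, and the neighborhood $\mathcal{N}_G(v_i)$ is a non-empty cutset of $G$: it isolates $v_i$, and for each $t \in \mathcal{N}_G(v_i)$, the vertex $t$ is a cut vertex of $G \setminus (\mathcal{N}_G(v_i)\setminus\{t\})$ because $v_i$ is attached to the rest only through $t$ there. Since $J_G$ Cohen-Macaulay implies $G$ accessible by \cite{acc}, iterating the accessibility property forces any descending chain $\mathcal{N}_G(v_i) \supsetneq T_1 \supsetneq \cdots \supsetneq \emptyset$ of cutsets to pass through a singleton cutset $\{u\}$ with $u \in \mathcal{N}_G(v_i)$ a cut vertex of $G$. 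Doing this for every $v_i$ and combining with the height formula $\mathrm{ht}(P_T(\mathbf{v})) = |V(G)| + |T| - c_G(T)$ from Lemma~\ref{lemunmin}, I would produce two cutsets with unequal heights, contradicting unmixedness of $\mathrm{in}_{<}(J_G)$ and hence Cohen-Macaulayness of $J_G$. In the base case $G = C_5$ this is immediate: $\mathrm{ht}(P_\emptyset)=4$ while $\mathrm{ht}(P_{\{v_2,v_5\}}(\mathbf{v})) = 5$.

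The main obstacle is this last step: executing the cutset computation uniformly in the presence of whiskers and possibly additional structure inside $B$ requires carefully combining the rigid combinatorics of the induced $C_5$ with the constraints on cut vertices extracted from iterated accessibility, and selecting a suitable witness cutset in this general setting is where the argument needs the most care.
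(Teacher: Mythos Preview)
Your overall strategy—reduce to the case of an induced $C_5$ via iterated applications of Proposition~\ref{propGv}, then derive a contradiction from accessibility and unmixedness—matches the paper's approach. Step~1 is essentially the paper's inductive step (the paper phrases it as induction on $\mathrm{girth}(G)$ rather than as a reduction), though you should also verify that each application of $G\mapsto G_v$ lowers the girth by exactly one, so that the graph you land on genuinely has girth $5$; this matters because the base-case argument needs the absence of triangles and $4$-cycles, not merely the presence of an induced $C_5$. Step~2 (passing to a block with whiskers via Corollary~\ref{corblockcm2}) is harmless but unnecessary; the paper works directly on $G$.

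The genuine gap is Step~3. Knowing that each $\mathcal{N}_G(v_i)$ is a cutset and that accessibility eventually produces a singleton cutset inside it does not, by itself, yield a cutset violating unmixedness; you have not explained how to combine these ingredients, and the toy case $G=C_5$ is not representative of the difficulty. The paper's argument is more delicate: one takes a \emph{minimal} subset $T_{v_1}\subseteq\mathcal{N}_G(v_1)$ separating $v_1$ from $\{v_3,v_4\}$, uses accessibility once to peel off some $u\in T_{v_1}$ with $T_{v_1}\setminus\{u\}\in\mathscr{C}(G)$, then repeats the construction at $u$ to obtain $T_u\subseteq\mathcal{N}_G(u)$ (with $v_1\in T_u$) and peels off some $s\in T_u$. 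The crucial use of $\mathrm{girth}(G)=5$ is that it forces sets such as $T_{v_1}\setminus\{u\}$ and $T_u\setminus\{v_1\}$ to be \emph{disjoint}, since a common element would close a $3$- or $4$-cycle through $v_1$ and $u$; hence their union is again a cutset, and its component count can be bounded below by summing the contributions of the pieces. A short case analysis (on whether $s=v_1$, and if not on a further iteration at $s$) then exhibits a cutset $T$ with $c_G(T)>|T|+1$, contradicting unmixedness. Your sketch does not isolate this disjointness-from-girth mechanism, which is the heart of the base case; without it the argument cannot be completed.
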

\begin{proof}
We may assume $G$ is connected. Let the girth of $G$ be finite. Assuming $\mathrm{girth}(G)\geq 5$, we will prove $J_{G}$ is not Cohen-Macaulay using induction on $\mathrm{girth}(G)$. Suppose $\mathrm{girth}(G)=5$ and $G$ is accessible. Then there exists an induced $5$-cycle $H$ of $G$. Let $V(H)=\{v_{1},\ldots,v_{5}\}$ and $E(H)=\{\{v_{1},v_{2}\}, \{v_{2},v_{3}\}, \{v_{3},v_{4}\}, \{v_{4},v_{5}\}, \{v_{5},v_{1}\}\}$. Note that there is a minimal $T_{v_{1}}\subseteq \mathcal{N}_{G}(v_{1})$ such that after removing $T_{v_{1}}$ from $G$ there is no path from $v_{1}$ to $v_{3}$ or $v_{4}$. Since $T_{v_{1}}$ is minimal with such property, we have $T_{v_{1}}\in\mathscr{C}(G)$. It is obvious that $v_{2},v_{5}\in T_{v_{1}}$. Since $T_{v_{1}}$ is accessible, there exists $u\in T_{v_{1}}$ such that $T_{v_{1}}\setminus\{u\}\in \mathscr{C}(G)$. Since $J_{G}$ is unmixed, $c_{G}(T_{v_{1}}\setminus\{u\})=\vert T_{v_{1}}\vert$. Let $G_{1}^{v_{1}},\ldots, G_{\vert T_{v_{1}}\vert}^{v_{1}}$ be the connected components of $G\setminus (T_{v_{1}}\setminus \{u\})$ such that $v_{1},u\in V(G_{\vert T_{v_{1}}\vert}^{v_{1}})$. Again, there is a minimal $T_{u}\subseteq \mathcal{N}_{G}(u)$ such that after removing $T_{u}$ from $G$ there is no path from $u$ to $v_{4}$ (if $u=v_5$, then we will choose $T_{u}\subseteq \mathcal{N}_{G}(u)$ such that after removing $T_u$ from $G$ there is no path from $u$ to $v_3$). Then $T_{u}\in\mathscr{C}(G)$ and $v_{1}\in T_{u}$. Since $T_{u}$ is accessible, there exists $s\in T_{u}$ such that $T_{u}\setminus \{s\}\in \mathscr{C}(G)$ and $c_{G}(T_{u}\setminus\{s\})=\vert T_{u}\vert$. Let $G_{1}^{u},\ldots, G_{\vert T_{u}\vert}^{u}$ be the connected components of $G\setminus (T_{u}\setminus \{s\})$ such that $u,s\in V(G_{\vert T_{u}\vert}^{u})$. We consider the following two cases:
\medskip

\noindent \textbf{Case I.} Suppose $v_{1}=s$. If $z\in (T_{v_{1}}\setminus \{u\})\cap (T_{u}\setminus\{v_{1}\})$, then the induced subgraph of $G$ on $\{v_1,u,z\}$ is a cycle of length $3$, which is not possible as $\mathrm{girth}(G)=5$. Therefore, $(T_{v_{1}}\setminus \{u\})\cap (T_{u}\setminus\{v_{1}\})=\phi$ and $T= (T_{v_{1}}\setminus \{u\})\sqcup (T_{u}\setminus\{v_{1}\})\in \mathscr{C}(G)$. We can observe that the connected components in $G\setminus T$ are $G_{1}^{v_{1}},\ldots,G_{\vert T_{v_{1}}\vert -1}^{v_{1}}, G_{1}^{u},\ldots, G_{\vert T_{u}\vert -1}^{u}$ along with one connected component containing the vertices $v_{1},u$ and one connected component containing the vertices $v_{4}$. Therefore 
$$c_{G}(T)\geq \vert T_{v_{1}}\vert -1 +\vert T_{u}\vert -1 +2= \vert T_{v_{1}}\vert +\vert T_{u}\vert > \vert T\vert +1,$$
which gives a contradiction as $J_{G}$ is unmixed.
\medskip

\noindent \textbf{Case II.} Assume $v_{1}\neq s$. Consider the minimal $T_{s}\subseteq \mathcal{N}_{G}(s)$ such that after removing $T_{s}$ from $G$ there is no path from $s$ to $v_{5}$. Then $T_{s}\in\mathscr{C}(G)$ and $u\in T_{s}$. Since $T_{s}$ is accessible, there exists $w\in T_{s}$ such that $T_{s}\setminus \{w\}\in \mathscr{C}(G)$ and $c_{G}(T_{s}\setminus\{w\})=\vert T_{s}\vert$. Let $G_{1}^{s},\ldots, G_{\vert T_{s}\vert}^{s}$ be the connected components of $G\setminus (T_{s}\setminus \{w\})$ such that $s,w\in V(G_{\vert T_{s}\vert}^{s})$. We consider two sub-cases:
\medskip

\noindent \textbf{Case A.} Suppose $u\neq w$. If $z\in (T_{v_{1}}\setminus \{u\})\cap (T_{s}\setminus\{w\})$, then the induced subgraph of $G$ on $\{v_1,u,s,z\}$ is a cycle of length $4$, which is not possible as $\mathrm{girth}(G)=5$. Therefore, $(T_{v_{1}}\setminus \{u\})\cap (T_{s}\setminus\{w\})=\phi$ and it is easy to observe that $S= (T_{v_{1}}\setminus \{u\})\sqcup (T_{s}\setminus\{w\})\in \mathscr{C}(G)$. The connected components in $G\setminus S$ are at least $G_{1}^{v_{1}},\ldots,G_{\vert T_{v_{1}}\vert -1}^{v_{1}}, G_{1}^{s},\ldots, G_{\vert T_{s}\vert -1}^{s}$ along with one connected component containing the vertices $s,w$ and one connected component containing the vertices $v_{1}$. Therefore 
$$c_{G}(S)\geq \vert T_{v_{1}}\vert -1 +\vert T_{s}\vert -1 +2= \vert T_{v_{1}}\vert +\vert T_{s}\vert > \vert S\vert +1,$$
which gives a contradiction as $J_{G}$ is unmixed.
\medskip

\noindent \textbf{Case B.} Let $u=w$. Then $(T_{u}\setminus \{s\})\cap (T_{s}\setminus\{u\})=\phi$ and $L= (T_{u}\setminus \{s\})\sqcup (T_{s}\setminus\{u\})\in \mathscr{C}(G)$ as girth of $G$ is $5$. In $G\setminus L$, observe that $G_{1}^{u},\ldots,G_{\vert T_{u}\vert -1}^{u}, G_{1}^{s},\ldots, G_{\vert T_{s}\vert -1}^{s}$ are connected components along with one connected component containing the vertices $u,s$ and one connected component containing the vertex $v_{5}$. Therefore 
$$c_{G}(L)\geq \vert T_{u}\vert -1 +\vert T_{s}\vert -1 +2= \vert T_{u}\vert +\vert T_{s}\vert > \vert L\vert +1,$$
which gives a contradiction as $J_{G}$ is unmixed. Hence, our initial assumption was wrong, i.e., if $\mathrm{girth}(G)=5$, then $G$ can not be accessible and thus, $J_{G}$ can not be Cohen-Macaulay. 
\medskip

Let $\mathrm{girth}(G)=n>5$. Consider a vertex $v$ of an induced cycle of length $n$ in $G$. Then the girth of $G_{v}$ is exactly $n-1$. By induction hypothesis, $J_{G_{v}}$ is not Cohen-Macaulay. Hence, $J_{G}$ is not Cohen-Macaulay by Proposition \ref{propGv}.
\end{proof}

\begin{remark}
From the proof of Theorem \ref{thmgirth} and \cite[Lemma 4.5]{acc}, it follows that if $G$ is an accessible graph, then $\mathrm{girth}(G)\leq 4$ or $\mathrm{girth}(G)=\infty$
\end{remark}

The converse of Theorem \ref{thmgirth} is not true, and it is clear from the earlier work on Cohen-Macaulay binomial edge ideals. From Theorem \ref{thmgirth}, one can ask whether the bound on the girth of graphs can be sharper or not in the case of Cohen-Macaulay binomial edge ideals. The answer is no, and it is clear from \cite[Figure 3]{cactus}.

\section{Some Open Problems}\label{secprob}

We have proved Theorem \ref{thmGcm} under Hypothesis \ref{hypo}. Due to \cite[Conjecture 1.1]{acc} and \cite[Proposition 5.14]{acc}, it seems the hypothesis is true, but we need to prove it as the conjecture is not proven yet for general graphs. Also, suppose the conjecture is not true, then we can not say the hypothesis will not hold. Thus, Hypothesis \ref{hypo} is a weaker assumption than the conjecture. We strongly believe that Hypothesis \ref{hypo} is true, and so, we put the following question.

\begin{question}
    Prove or disprove Hypothesis \ref{hypo}.
\end{question}

Consider a graph $G$ as in Setup \ref{setup}. If $J_{G}$ is Cohen-Macaulay, then by Theorem \ref{thmGbarcm}
\begin{equation}\label{eqdepth}
\mathrm{depth}(\mathcal{S}/J_{G})=\mathrm{depth}(\mathcal{S}_{1}/J_{\ov{G_{1}}})+ \mathrm{depth}(\mathcal{S}_{2}/J_{\ov{G_{2}}})-4.
\end{equation}

If $G$ is decomposable into $G_{1}$ and $G_{2}$, then also the above equality (\ref{eqdepth}) holds by \cite[Theorem 2.7]{raufrin}. In general, the equality (\ref{eqdepth}) does not hold:

\begin{figure}[H]
	\centering
	\begin{tikzpicture}
  [scale=.6,auto=left,every node/.style={circle,scale=0.5}]
    
  \node[draw] (n1) at (0,4)  {$8$};
  \node[draw] (n2) at (0,0)  {$2$};
  \node[draw] (n3) at (4,4) {$6$};
   \node[draw] (n4) at (-2,2) {$4$};
   \node[draw] (n5) at (1,2) {$5$};
  \node[draw] (n6) at (4,0) {$3$};
  \node[draw] (n7) at (-3,4) {$9$};
  \node[draw] (n8) at (-3,0) {$1$};
  \node[draw] (n9) at (4,6.5) {$7$};
  \node[draw] (n10) at (-3,6.5) {$10$};
  \node[draw] (n11) at (0,6.5) {$11$};
  \node[draw] (n12) at (2,6.5) {$12$};

  \foreach \from/\to in {n1/n7,n1/n3, n1/n4, n2/n3, n2/n4, n2/n6, n2/n8, n3/n5, n3/n6, n3/n9, n4/n5, n5/n6, n7/n10, n10/n11, n1/n11, n11/n12}
    \draw[] (\from) -- (\to);
   
\end{tikzpicture}
\caption{Graph $G=G_{1}\cup G_{2}$ such that $V(G_{1})\cap V(G_{2})=\{8\}$}\label{figneqdepth}
\end{figure}
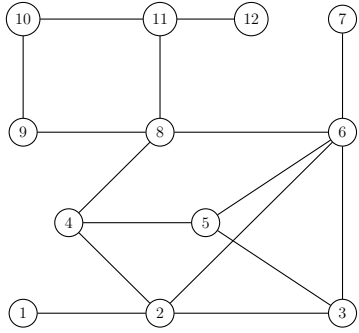

\begin{example}{\rm
Consider the graph $G$ in Figure \ref{figneqdepth} with the decomposition $G=G_{1}\cup G_{2}$, where $V(G_{1})=\{1,\ldots,8\}$ and $V(G_{2})=\{8,\ldots,12\}$. Using Macaulay2 \cite{mac2}, we see the equality (\ref{eqdepth}) does not hold for $G$ with this decomposition. In this case, we observe that $\mathcal{N}_{G_{1}}(8)\subseteq \{3,4,6\}\in \mathscr{C}(G_{1}\setminus\{8\})$ and $8$ is not a free vertex of $G_{2}$. 
}
\end{example}

\noindent Let $J_{\ov{G_{i}}}$ be Cohen-Macaulay for each $i=1,2$. If there exists $T_{1}\in \mathscr{C}(G_{1}\setminus \{v\})$ with $\mathcal{N}_{G_{1}}(v)\subseteq T_{1}$ and $v$ is not a free vertex of $G_{2}$, then $J_{G}$ is not unmixed (follows from the proof of Lemma \ref{lemG-vunm}) and hence, the equality (\ref{eqdepth}) does not hold in this case. Similarly, if we interchange $G_{1}$ and $G_{2}$, then we get same observation also. Considering all of these cases, we ask the following question.

\begin{question}{\rm
Consider a graph as in Setup \ref{setup}. Suppose $G$ satisfies the following:
\begin{enumerate}[(i)]
\item If there exists a cutset of $G_{1}\setminus \{v\}$ containing $\mathcal{N}_{G_{1}}(v)$, then $v$ is a free vertex of $G_{2}$;
\item If there exists a cutset of $G_{2}\setminus \{v\}$ containing $\mathcal{N}_{G_{2}}(v)$, then $v$ is a free vertex of $G_{1}$.
\end{enumerate}
Does the equality (\ref{eqdepth}) hold in this case?
}
\end{question}

For an ideal $I$ of $R$, we denote by $\mathrm{reg}(R/I)$, the Castelnuovo-Mumford regularity of $R/I$. Let $G$ be a graph as described in Setup \ref{setup}. If $v$ is a free vertex of both $G_1$ and $G_2$, then by \cite[Theorem 3.1]{jnr19}, we get $\mathrm{reg}(\mathcal{S}/J_{G})=\mathrm{reg}(\mathcal{S}_{1}/J_{\ov{G_{1}}})+ \mathrm{reg}(\mathcal{S}_{2}/J_{\ov{G_{2}}})-2$. Thus, keeping the equality (\ref{eqdepth}) in mind, we propose the following question.

\begin{question}{\rm
Let $G$ be a graph as described in Setup \ref{setup} and $v$ is not a free vertex of at least one of $G_1$ and $G_2$. Then, does there exist any relation among $\mathrm{reg}(\mathcal{S}/J_{G})$, $\mathrm{reg}(\mathcal{S}_{1}/J_{\ov{G_{1}}})$, $\mathrm{reg}(\mathcal{S}_{2}/J_{\ov{G_{2}}})$ under some suitable condition?
}
\end{question}

The next question is about the existence of some graphs with Cohen-Macaulay binomial edge ideals. From Theorem \ref{thmgirth}, we see that there exists no graph $G$ with Cohen-Macaulay $J_{G}$ having $4<\mathrm{girth}(G)<\infty$. In \cite[Example 3]{s2acc}, we see a graph having an induced cycle of length $5$ with Cohen-Macaulay binomial edge ideal, but in this case, the girth of the graph is $3$. The next question is about graphs with girth $4$.

\begin{question}{\rm
Does a graph $G$ of girth $4$ with Cohen-Macaulay $J_{G}$ exist such that $G$ has an induced cycle of length $5$ or more? Till now, we have not seen any accessible graph having an induced cycle of length $6$ or more. Does there exist such a graph?
}
\end{question}

\bibliographystyle{amsalpha}

\begin{thebibliography}{A}

\bibitem{mont} $\grave{\text{A}}$lvarez Montaner, J., \emph{Local cohomology of binomial edge ideals and their generic initial ideals}. Collect. Math. 71 (2020), no. 2, 331--348.

\bibitem{cmring} Bruns, W.; Herzog, J., \emph{Cohen-Macaulay rings}, Cambridge Studies in Advanced Mathematics, 39, Cambridge University Press, Cambridge, 1993.

\bibitem{cs10} Cartwright, D.; Sturmfels, B., \emph{The Hilbert scheme of the diagonal in a product of projective spaces}, Int. Math. Res. Not. IMRN, (2010), no. 9, 1741--1771.

\bibitem{chhktt19} Caviglia, G.; H\`a, H.T.; Herzog, J.; Kummini, M.; Terai, N.; Trung, N.V., \emph{Depth and regularity modulo a principal ideal}, J. Algebraic Combin. 49 (2019), no. 1, 1--20.

\bibitem{cv20} Conca, A.; Varbaro, M., \emph{Square-free Gr\"{o}bner degenerations}, Invent. Math. 221 (2020), no. 3, 713--730.

\bibitem{cdg18}  Conca, A.; De Negri, E.; Gorla, E., \emph{Cartwright-Sturmfels ideals associated to graphs and linear spaces}, J. Comb. Algebra 2 (2018), no. 3, 231–257.
 
\bibitem{cdg20} Conca, A.; De Negri, E.; Gorla, E., \emph{Universal Gr\"{o}bner bases and Cartwright-Sturmfels ideals}, Int. Math. Res. Not. IMRN (2020), no. 7, 1979--1991.

\bibitem{dhs13} Dao, H.; Huneke, C.; Schweig, J., \emph{Bounds on the regularity and projective dimension of ideals associated to graphs}, J. Algebraic Combin. 38 (2013), no. 1, 37--55.

\bibitem{bms_cmbip} Bolognini, D.; Macchia, A.; Strazzanti, F., \emph{Binomial edge ideals of bipartite graphs}, European J. Combin. 70 (2018), 1--25.

\bibitem{acc} Bolognini, D.; Macchia, A.; Strazzanti, F., \emph{Cohen-Macaulay binomial edge ideals and accessible graphs}, J. Algebraic Combin. 55 (2022), no. 4, 1139--1170.

\bibitem{ehh_cmbin} Ene, V.; Herzog, J.; Hibi, T., \emph{Cohen-Macaulay binomial edge ideals}, Nagoya Math. J. 204 (2011), 57--68.

\bibitem{mac2} Grayson, D.; Stillman, M., \emph{Macaulay2, a software system for research in algebraic geometry}, available at http://www .math .uiuc .edu /Macaulay2/.

\bibitem{hhhrkara} Herzog, J.; Hibi, T.; Hreinsd\'{o}ttir, F.; Kahle, T.; Rauh, J.,\emph{Binomial edge ideals and conditional independence statements}, Adv. in Appl. Math. 45.3 (2010), pp. 317--333.

\bibitem{jnr19} Jayanthan, A. V.; Narayanan, N.; Raghavendra Rao, B. V., \emph{Regularity of binomial edge ideals of certain block graphs}, Proc. Indian Acad. Sci. Math. Sci. 129 (2019), no. 3, Paper No. 36, 10 pp.

\bibitem{ksara_cmunm} Kiani, D.; Saeedi Madani, S., \emph{Some Cohen-Macaulay and unmixed binomial edge ideals}. In: Comm. Algebra 43.12 (2015), pp. 5434--5453.

\bibitem{s2acc} Lerda, A.; Mascia, C.; Rinaldo, G.; Romeo, F., \emph{$(S_{2})$-Condition and Cohen-Macaulay binomial edge ideals}, J. Algebraic Combin. (2022), DOI: https://doi.org/10.1007/s10801-022-01173-8.

\bibitem{ohtani} Ohtani, M., \emph{Graphs and ideals generated by some 2-minors}, Comm. Algebra 39.3 (2011), pp. 905--917.

\bibitem{raufrin} Rauf, A.; Rinaldo, G., \emph{Construction of Cohen-Macaulay binomial edge ideals}, Comm. Algebra 42.1 (2014), pp. 238--252.

\bibitem{cactus} Rinaldo, G., \emph{Cohen-Macaulay binomial edge ideals of cactus graphs}, J. Algebra Appl. 18.4 (2019), pp. 1950072, 18.

\bibitem{rin_smaldev} Rinaldo, G., \emph{Cohen-Macauley binomial edge ideals of small deviation}, Bull. Math. Soc. Sci. Math. Roumanie (N.S.) 56(104).4 (2013), pp. 497--503.

\bibitem{ccm} Saha, K.; Sengupta, I., \emph{Closed Cohen-Macaulay completion of binomial edge ideals}, to appear in Algebra Colloq., arXiv:2104.14143, pp. 1--24.

\bibitem{whisker} Saha, K.; Sengupta, I., \emph{Cohen-Macaulay Binomial edge ideals in terms of blocks with whiskers}, to appear in J. Algebra Appl., arXiv:2203.04652, pp. 1--31.

\bibitem{thom83} Thomassen, C., \emph{Girth in graphs}, J. Combin. Theory Ser. B 35 (1983), no. 2, 129--141.
\end{thebibliography}

\end{document}